\newcommand*{\A}{\mathbb{A}}
\newcommand*{\F}{\mathbb{F}}
\newcommand*{\N}{\mathbb{N}}
\newcommand*{\Q}{\mathbb{Q}}
\newcommand*{\Z}{\mathbb{Z}}
\DeclarePairedDelimiter\abs{\lvert}{\rvert}
\newcommand*{\dd}{%
  \mathop{\mathrm{d}\null}\mskip-\thinmuskip\mathord{\null}}
  \newcommand*{\ee}{\mathrm{e}}
\DeclarePairedDelimiter\ent{\lfloor}{\rfloor}
\newcommand*{\eps}{\varepsilon}
\DeclareMathOperator{\frob}{\sigma}
\DeclareMathOperator{\Gal}{Gal}
\renewcommand*{\geq}{\geqslant}
\newcommand*{\GSp}{\mathrm{GSp}}
\newcommand*{\GL}{\mathrm{GL}}
  \newcommand*{\ic}{\mathrm{i}}
\renewcommand*{\leq}{\leqslant}
\newcommand*{\PGSp}{\mathrm{PGSp}}
\newcommand*{\prem}{\mathcal{P}}
\newcommand*{\re}{\mathord{R\mkern-1mu e}}
\DeclareMathOperator{\Res}{Res}
\DeclareMathOperator{\sgn}{sgn}
\newcommand*{\Sp}{\mathrm{Sp}}
\newcommand*{\sset}{\smallsetminus}
\DeclareMathOperator{\Tr}{Tr}
\newtheoremstyle{erthm}
  {}
  {}
  {\itshape}
  {}
  {\fontseries{bx}\selectfont\itshape}
  {--}
  { }
  {}
  \newtheoremstyle{errem}
    {}
    {}
    {}
    {}
    {\itshape}
    {--}
    { }
    {}
\theoremstyle{erthm}
\newtheorem{theorem}{Theorem}
\newtheorem{theoremAN}{Theorem}
\newtheorem{lemma}{Lemma}
\newtheorem{proposition}{Proposition}
\theoremstyle{errem}
\newtheorem{remark}{Remark}
\newtheorem{remarkAN}{Remark}
\title[Non-vanishing and sign changes of Hecke eigenvalues]
{Non-vanishing and sign changes of Hecke eigenvalues for Siegel cusp
  forms of genus two (with an Appendix by E. Kowalski and A. Saha)}
\author[E. Royer]{Emmanuel Royer}
\address{%
Emmanuel Royer\\
Clermont Universit\'e\\
Universit\'e Blaise Pascal\\
Laboratoire de math\'ematiques\\
BP 10448\\
F-63000 Clermont-Ferrand\\
France %
}
\curraddr{%
Emmanuel Royer\\
Universit\'e Blaise Pascal\\
Laboratoire de math\'ematiques\\
Les C\'ezeaux\\
BP 80026\\
F-63171 Aubi\`ere Cedex\\
France %
}
\email{{emmanuel.royer@math.univ-bpclermont.fr}}
\author[J. Sengupta]{Jyoti Sengupta}
\address{Jyoti Sengupta\\
School of Mathematics \\
T.I.F.R. \\
Homi Bhabha Road \\
400\ 005 Mumbai\\
India}
\email{sengupta@math.tifr.res.in}
\author[J. Wu]{Jie Wu}
\address{%
Jie Wu\\
CNRS\\
Institut \'Elie Cartan de Lorraine\\
UMR 7502\\
F-54506 Van\-d\oe uvre-l\`es-Nancy\\
France}
\curraddr{%
Université de Lorraine\\
Institut \'Elie Cartan de Lorraine\\
UMR 7502\\
F-54506 Van\-d\oe uvre-l\`es-Nancy\\
France
}
\email{jie.wu@univ-lorraine.fr}
\keywords{Spinor zeta function, Siegel form, Fourier coefficients, Hecke eigenvalues}
\thanks{The work was supported by a grant from the Indo-French Center for promotion of Advanced Research (CEFIPRA Project No. 4601-2). We thank warmly the referee for his valuable remarks.}
\subjclass[2010]{11F46,11F30,11M41,11N37,11N56}
\begin{document}
\mathtoolsset{showonlyrefs,mathic,centercolon}
\begin{abstract}
In this paper, we show that half of non-zero coefficients of the spinor zeta function of a Siegel cusp form of genus \(2\) 
are positive and half are negative. We also prove results concerning the non-vanishing in short intervals and strong cancellation among the coefficients evaluated at powers of a fixed prime. Our results rest on a Serre's type density result established by Kowalski \& Saha in the appendix.%
\end{abstract}
\maketitle
\section{Introduction}

Let \(S_k\) be the space of Siegel cusp forms of integral weight \(k\) on the group \(\Sp_4(\Z)\subset \GL_4(\Q)\) and let \(F\) be a non zero eigenfunction of all the Hecke operators \(T(n)\) (\(n\in\N\)).  As usual denote by \(\lambda_F(n)\) the \(n\)-th normalized Hecke eigenvalue of \(F\).  Let \(\alpha_{0, p},\alpha_{1,p},\alpha_{2, p}\) be the Satake \(p\)-parameters attached to \(F\) normalized by %
\begin{equation}\label{productalphajp}
\alpha_{0, p}^2 \alpha_{1, p}\alpha_{2, p} = 1.
\end{equation}
Let \(\prem\) be the set of prime numbers. Denote by \(Z_F\) the spinor zeta function of \(F\):
\begin{equation}\label{defZFsEuler}
Z_F(s) \coloneqq\prod_{p\in\prem} Z_{F, p}(p^{-s})^{-1}\eqqcolon \sum_{n\geq 1} a_F(n) n^{-s}\quad(\re s> 1),
\end{equation}
with
\begin{equation}\label{defZFpt}  
Z_{F, p}(t)^{-1}\coloneqq(1-\alpha_{0, p}t)(1-\alpha_{0, p}\alpha_{1, p}t)(1-\alpha_{0, p}\alpha_{2, p}t)(1-\alpha_{0, p}\alpha_{1, p}\alpha_{2, p}t). %
\end{equation}

A Siegel form is in the Maass subspace \(S_k^M\) of \(S_k\) if it is a linear combination of Siegel forms \(F\) that are eigenvectors of all the Hecke operators and for which there exists a primitive modular form \(f\) of weight \(2k-2\) such that %
\[ %
Z_F(s)=\zeta\left(s-\tfrac{1}{2}\right)\zeta\left(s+\tfrac{1}{2}\right)L(f,s). %
\]
Here \(L(f,s)\) is the \(L\)-function of \(f\) (note that we normalize all the \(L\)-functions so that the critical strip is \(0\leq\re s\leq 1\) and the functional equation relates the value at \(s\) to the value at \(1-s\)). This happens only if \(k\) is even. The bijective linear application between \(S_k^M\) and the space of modular forms of weight \(2k-2\) is called the Saito-Kurokawa lifting~\cite{MR633910}. The Ramanujan-Petersson conjecture asserts that
\begin{equation}\label{RPConjecture}
\abs{\alpha_{j, p}}=1 \text{ for \(j=0, 1, 2\) and all primes \(p\).}
\end{equation}
It is not true for Siegel Hecke-eigenforms in \(S_k^M\). But, if \(k\) is odd or, if \(k\) is even and the form is in the orthogonal complement of \(S_k^M\), then it has been established by Weissauer~\cite{Weissauer1993}.  It is well known that
\begin{equation}\label{relationaFlambdaF}
\sum_{n\geq 1} \frac{\lambda_F(n)}{n^s}=\frac{Z_F(s)}{\zeta(2s+1)}. %
\end{equation}
From this, it is easy to see that
\begin{equation}\label{lambdaFnaFn}
\lambda_F(n)= \sum_{d^2m = n} \frac{\mu(d)}{d} a_F(m),
\end{equation}
where \(\mu\) is the M\"obius function. Clearly \eqref{RPConjecture} and \eqref{lambdaFnaFn} imply that
\begin{align}
\abs{a_F(n)}&\leq d_4(n)\label{RP-Conjecture}\\
\abs{\lambda_F(n)}&\leq d_5(n)\label{RamanujanConjecture}
\end{align}
for all integers \(n\geq 1\), where \(d_{\ell}(n)\) is the number of way of writing \(n\) as a product of \(\ell\) positive integers.

We are interested in non vanishing and sign changes of \(\lambda_F(n)\). We denote by \(H_k^*\) the set of Siegel cuspidal Hecke-eigenforms of weight \(k\) and genus \(2\) that, if \(k\) is even, are in the orthogonal complement of \(S_k^M\). The forms we consider in this paper all belong to \(H_k^*\).  According to Breulmann~\cite{Breulmann1999}, a Siegel Hecke-eigenform \(F\) is in the Maass space \(S_k^M\) if and only if \(\lambda_F(n)>0\) for all \(n\); see \cite{Breulmann1999}. On the other hand, Kohnen \cite{Kohnen2007} has proved that if \(F\) is not in the Maass space, then the sequence \(\left(\lambda_F(n)\right)_{n\in\N}\) has infinitely many sign changes. Further Das \cite{Das2013} showed that there is a positive proportion of prime numbers \(p\) such that \(\lambda_F(p)>0\) (respectively \(\lambda_F(p)<0\)).

Define
\begin{equation}\label{defNF*pmx}
\mathscr{N}_{F}^{*}(x)
\coloneqq\sum_{\substack{n\leq x\\ \lambda_F(n)\neq0}} 1,
\qquad
\mathscr{N}_{F}^{+}(x)\coloneqq\sum_{\substack{n\leq x\\ \lambda_F(n)>0}} 1,
\qquad
\mathscr{N}_{F}^{-}(x)\coloneqq\sum_{\substack{n\leq x\\ \lambda_F(n)<0}} 1.
\end{equation}
We are interested in asymptotic behaviour of these functions as \(x\to\infty\).  One of our principal tools is a recent result of Kowalski and Saha (see Theorem 1 in the Appendix below).  Let \(F\) be a Siegel cusp form of genus \(2\) and level \(1\) which is a Hecke eigenform. Then for any \(\delta\in (0, \tfrac{1}{10})\), the inequality
\begin{equation}\label{KowalskiSaha}
\#\{p\leq x : \lambda_F(p)=0\}\ll_{F, \delta} \frac{x}{(\log x)^{1+\delta}}
\end{equation}
holds for all \(x\geq 2\).

The first aim of this paper is to prove the following result by using the method of Matom\"aki \& Radziwill \cite{MR14} based on multiplicative function theory.

\begin{theorem}\label{thm1}
Let \(F\) be a non-zero Siegel-Hecke eigenform in \(S_k\) and suppose that either \(k\) is odd, or \(k\) is even and \(F\) is in the orthogonal complement of \(S_k^M\). %
\begin{enumerate}[(i)] %
\item We have
\begin{equation}\label{NonVanishing}
\mathscr{N}_F^{*}(x)= \rho_F x \bigg\{1 + O_F\bigg(\frac{1}{(\log x)^{\delta}}\bigg)\bigg\}
\end{equation}
for \(x\to\infty\), where \(\delta\) is given by \eqref{KowalskiSaha} and
\begin{equation}\label{defrho}
\rho_F \coloneqq\prod_{p\in\prem}\bigg(1-\frac{1}{p}\bigg)\sum_{\nu\geq 0} \frac{\delta_F(p^{\nu})}{p^{\nu}}>0.
\end{equation}
Here \(\delta_F(n)\) is the characteristic function of the \(n\) such that \(\lambda_F(n)\neq0\).
\item 
We have
\begin{equation}\label{LowerNpmShort}
\mathscr{N}_F^{\pm}(x)
= \tfrac{1}{2} \rho_F x \bigg\{1 + O_F\bigg(\frac{1}{(\log x)^{K/16}}\bigg)\bigg\}
\end{equation}
for \(x\to\infty\), where \(K = 0.32867 \dots = -\cos\phi_0\) and \(\phi_0\) is the unique root in \((0, \pi)\) of the equation \(\sin\phi-\phi\cos\phi=\frac{1}{2}\pi\).
\end{enumerate}
\end{theorem}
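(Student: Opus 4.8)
The plan is to prove (i) by an elementary multiplicative–function argument fed by the Kowalski–Saha bound \eqref{KowalskiSaha}, and then to deduce (ii) from (i) together with a sharp mean value estimate for the real multiplicative function $\sgn\circ\lambda_F$.
\emph{Part (i).} Hecke eigenvalues are multiplicative, so $\delta_F$ is multiplicative; it is $\{0,1\}$-valued, with $\delta_F(p^\nu)=0$ exactly when $\lambda_F(p^\nu)=0$. Set $g:=\mu*\delta_F$, so $\delta_F(n)=\sum_{d\mid n}g(d)$. Then $g$ is multiplicative, $g(p)=\delta_F(p)-1$ equals $-1$ if $\lambda_F(p)=0$ and $0$ otherwise, $|g(p^\nu)|\leq1$, and $g(d)=0$ unless every prime dividing $d$ to exactly the first power lies in $\prem_0:=\{p\in\prem:\lambda_F(p)=0\}$. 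By \eqref{KowalskiSaha} one has $\#\{p\leq x:p\in\prem_0\}\ll_{F,\delta}x(\log x)^{-1-\delta}$, so $\sum_{p\in\prem_0}p^{-1}<\infty$, $\sum_{p\in\prem_0,\,p>y}p^{-1}\ll_F(\log y)^{-\delta}$, and a routine sieve gives $\#\{a\leq y:\ a\text{ squarefree},\ p\mid a\Rightarrow p\in\prem_0\}\ll_F y(\log y)^{-1-\delta}$. Writing each $d$ in the support of $g$ as its powerful part times its squarefree part and summing over the powerful part (whose counting function up to $z$ is $\ll\sqrt z$), one obtains $\sum_{d\leq x}|g(d)|\ll_F x(\log x)^{-1-\delta}$ and $\sum_{d>x}|g(d)|/d\ll_F(\log x)^{-\delta}$; also $\sum_{d\geq1}g(d)/d=\prod_{p}(1-p^{-1})\sum_{\nu\geq0}\delta_F(p^\nu)p^{-\nu}=\rho_F>0$ converges absolutely. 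Hence
\[
\mathscr N_F^*(x)=\sum_{d\leq x}g(d)\bigl\lfloor x/d\bigr\rfloor
=x\sum_{d\geq1}\frac{g(d)}{d}-x\sum_{d>x}\frac{g(d)}{d}+O\Bigl(\sum_{d\leq x}|g(d)|\Bigr)
=\rho_F x+O_F\!\bigl(x(\log x)^{-\delta}\bigr),
\]
which is \eqref{NonVanishing}.

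\emph{Part (ii).} Since $\tfrac12(\delta_F(n)+\sgn\lambda_F(n))$ and $\tfrac12(\delta_F(n)-\sgn\lambda_F(n))$ are the indicators of $\lambda_F(n)>0$ and $\lambda_F(n)<0$, we have $\mathscr N_F^{\pm}(x)=\tfrac12\mathscr N_F^*(x)\pm\tfrac12\sum_{n\leq x}\sgn\lambda_F(n)$, so part (i) (applied with a fixed $\delta\in[K/16,\tfrac{1}{10})$ in \eqref{KowalskiSaha}) reduces \eqref{LowerNpmShort} to $\bigl|\sum_{n\leq x}\sgn\lambda_F(n)\bigr|\ll_F x(\log x)^{-K/16}$. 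Now $f:=\sgn\circ\lambda_F$ is real and multiplicative with $|f|\leq1$, and Hall's sharp form of Hal\'asz's theorem — the mean value bound with the optimal constant $K=-\cos\phi_0$, which is the tool used in the Matom\"aki \& Radziwill method — gives $\bigl|x^{-1}\sum_{n\leq x}f(n)\bigr|\ll(\log x)^{-K\mathbb D_x+o(1)}$, where $\mathbb D_x:=(\log\log x)^{-1}\min_{|t|\leq\log x}\sum_{p\leq x}\frac{1-f(p)\cos(t\log p)}{p}$; so it suffices to prove $\mathbb D_x\geq\tfrac{1}{16}-o(1)$. For $t$ bounded away from $0$ this is clear from $f(p)\cos(t\log p)\leq|\cos(t\log p)|$ and $\sum_{p\leq x}|\cos(t\log p)|/p\leq(\tfrac{2}{\pi}+o(1))\log\log x$, which give a value $\geq(1-\tfrac{2}{\pi}-o(1))\log\log x$; for $t$ near $0$ the triangle inequality for the pretentious distance reduces matters to $t=0$, where
\[
\sum_{p\leq x}\frac{1-\sgn\lambda_F(p)}{p}
\geq 2\sum_{\substack{p\leq x\\ \lambda_F(p)<0}}\frac{1}{p}
\geq \tfrac{1}{2}\sum_{\substack{p\leq x\\ \lambda_F(p)<0}}\frac{|\lambda_F(p)|}{p}
=\tfrac{1}{4}\Bigl(\sum_{p\leq x}\frac{|\lambda_F(p)|}{p}-\sum_{p\leq x}\frac{\lambda_F(p)}{p}\Bigr)
\geq \tfrac{1}{16}\sum_{p\leq x}\frac{\lambda_F(p)^2}{p}-O_F(1).
\]
This uses the Deligne-type bound $|\lambda_F(p)|=|a_F(p)|\leq d_4(p)=4$ (combine \eqref{lambdaFnaFn} and \eqref{RP-Conjecture}; it is applied twice), the bound $\sum_{p\leq x}\lambda_F(p)/p=O_F(1)$ coming from the holomorphy and non-vanishing of $Z_F$ at $s=1$ (valid since $F\in H_k^*$ is not of Saito--Kurokawa type), and $\sum_{p\leq x}\lambda_F(p)^2/p=\log\log x+O_F(1)$, which holds because the degree-$16$ Rankin--Selberg $L$-function $L(F\times F,s)$ has a simple pole at $s=1$ ($F$ being cuspidal, self-dual and non-CAP). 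Thus $\mathbb D_x\geq\tfrac{1}{16}-o(1)$, giving \eqref{LowerNpmShort}.

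The heart of the matter, and the step I expect to require the most care, is the $t=0$ lower bound $\sum_{p\leq x}(1-\sgn\lambda_F(p))/p\geq(\tfrac{1}{16}+o(1))\log\log x$: it is there that the analytic inputs enter (holomorphy and edge non-vanishing of the spinor $L$-function, and the pole of the degree-$16$ Rankin--Selberg $L$-function), and there that the constant $\tfrac{1}{16}$ — and hence the final exponent $K/16$ produced by Hall's inequality — is created; one must use \eqref{RP-Conjecture} here rather than the weaker \eqref{RamanujanConjecture}. Part (i), by contrast, is routine once \eqref{KowalskiSaha} is in hand.
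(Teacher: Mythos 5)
Your part (ii) is in substance the paper's own proof: the same decomposition $\mathscr{N}_F^{\pm}(x)=\tfrac12\mathscr{N}_F^{*}(x)\pm\tfrac12\sum_{n\le x}\sgn\lambda_F(n)$, the same lower bound $\sum_{p\le x,\,\lambda_F(p)<0}p^{-1}\ge(\tfrac1{32}+o(1))\log_2x$ obtained from the Rankin--Selberg input for the $\GL_4$ transfer combined with $\abs{\lambda_F(p)}\le4$ and $\abs{\lambda_F(p)}\ge\lambda_F(p)^2/4$, and the same sharp constant $K$. The one genuine problem is how you invoke the mean value theorem. For a \emph{real} multiplicative function with $\abs{f}\le1$, the Hall--Tenenbaum theorem (the paper's Lemma \ref{HT1991}) reads $\sum_{n\le x}f(n)\ll x\exp\{-K\sum_{p\le x}(1-f(p))/p\}$ with no minimum over $t$; your Hal\'asz-type formulation with $\min_{\abs{t}\le\log x}$, patched by ``the triangle inequality for $t$ near $0$'', does not recover the exponent $K/16$. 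Indeed your two regimes (fixed $t\neq0$, and $t$ so small that the distance to $n^{it}$ is $O(1)$, i.e. $\abs{t}\ll1/\log x$) leave the intermediate range uncovered: for $\abs{t}\asymp(\log x)^{-1+\eps}$ one has $\sum_{p\le x}(1-\cos(t\log p))/p\approx\eps\log_2x$, and the triangle inequality only gives $D(f,n^{it};x)\ge(\tfrac14-\sqrt{\eps}+o(1))^2\log_2x$, which is strictly below $\tfrac1{16}\log_2x$, so the exponent produced by your stated tool is strictly smaller than $K/16$. Quoting the real-function version removes the whole $t$-analysis and turns your argument into exactly the paper's.

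For part (i) you take a genuinely different route: the paper simply applies the Halberstam--Song--Liu--Wu mean value theorem (Lemma \ref{LW2014}) to the nonnegative multiplicative function $\delta_F$ with $\kappa=1$, the hypothesis \eqref{TWcondition1} being checked from \eqref{KowalskiSaha} and the prime number theorem. Your convolution argument $\delta_F=1*g$, $g=\mu*\delta_F$, is a reasonable way to reprove that special case, but the two quantitative claims you label a ``routine sieve'' are precisely where the work lies and are not justified as written: (a) the bound $\#\{a\le y:\ a\ \text{squarefree},\ p\mid a\Rightarrow\lambda_F(p)=0\}\ll y(\log y)^{-1-\delta}$ needs an actual argument (decomposing on the largest prime factor reduces it to controlling the smooth elements of this set; the naive induction on the number of prime factors lets the constants blow up); and (b), more seriously, the tail bound $\sum_{d>x}\abs{g(d)}/d\ll(\log x)^{-\delta}$ does \emph{not} follow from (a) by partial summation, since $\int^{\infty}\dd t/(t(\log t)^{\delta})$ diverges for $\delta<1$; it requires a separate argument (again splitting on $P^{+}(d)$, with a Rankin-type treatment of the smooth range), and one must track the loss this produces --- tolerable only because \eqref{KowalskiSaha} holds for every $\delta<\tfrac1{10}$, so a small loss in the exponent can be absorbed by shrinking $\delta$. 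As written these steps are gaps; either fill them in detail or do as the paper does and quote the mean value lemma directly.
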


\begin{remark}
The same proof shows that the results of Theorem \ref{thm1} also hold if we replace \(\lambda_F(n)\) by \(a_F(n)\). A related short interval result has been obtained by Royer, Sengupta and Wu \cite{RSW2014}.
\end{remark}

\begin{remark}
Theorem~\ref{thm1} establishes that half of non-zero coefficients of the spinor zeta function of a Siegel cusp form of genus \(2\) 
are positive and half are negative. The same result in the framework of modular forms has been established (independently) by Elliott \& Kish \cite{EK14} and by Matom{\"a}ki \& Radziwill \cite{MR14}.
\end{remark}

Pitale and Schmidt \cite{PS2008} proved that if \(F\) is not in the Maass subspace, there exists an infinite set of prime numbers \(p\) so that there are infinitely many \(\nu\) with \(\lambda_F(p^\nu)>0\) and infinitely many \(\nu\) with \(\lambda_F(p^\nu)<0\). Define the parameters \(a\) and \(b\) by %
\begin{equation}\label{defab}
a \coloneqq\alpha_{0,p}, \quad a^{-1} \coloneqq\alpha_{0,p}\alpha_{1,p}\alpha_{2,p},
\quad
b \coloneqq\alpha_{0,p}\alpha_{1,p},
\quad
b^{-1} \coloneqq\alpha_{0,p}\alpha_{2,p}. 
\end{equation}
We also use the notation 
\begin{equation}\label{deftatb}
t_a \coloneqq a+a^{-1}
\qquad\text{and}\qquad
t_b \coloneqq b+b^{-1}.
\end{equation}
The following theorem gives a quantitative description of Pitale and Schmidt's result.

\begin{theorem}\label{thm2}
Let \(F\) be a non-zero Siegel-Hecke eigenform in \(S_k\) and suppose that either \(k\) is odd, or \(k\) is even and \(F\) is in the orthogonal complement of \(S_k^M\). %
\begin{enumerate}[(i)]
\item 
Let \(p\) be a prime number such that \(1, a^2, a^{-2}, b^2, b^{-2}, ab, (ab)^{-1}, ab^{-1}, a^{-1}b\) are different. We have
\begin{equation}\label{SecondMoment}
\sum_{\nu\colon p^{\nu}\leq x} \lambda_F(p^{\nu})^2 \log\bigg(\frac{x}{p^{\nu}}\bigg)= \frac{C_{F,p}}{\log p} (\log x)^2 + O_{F, p}(\log x)
\end{equation}
for all \(x\geq 2\), where 
\begin{equation}\label{defCFp}
C_{F,p}\coloneqq\frac{2}{(t_a-t_b)^2}\bigg\{\bigg(\frac{1}{4-t_a^2}+\frac{1}{4-t_b^2}\bigg)\bigg(1-\frac{1}{p}\bigg)^2 + \frac{2}{p}\bigg\}>0.
\end{equation}
\item 
Let \(p\) be a prime number such that \(1, a, a^{-1}, b, b^{-1}\) are different. We have
\begin{equation}\label{FirstMoment}
\sum_{\nu\colon p^{\nu}\leq x} \lambda_F(p^{\nu}) \log\bigg(\frac{x}{p^{\nu}}\bigg)\ll_{F, p} \log x
\end{equation}
for all \(x\geq 2\).
\end{enumerate}
\end{theorem}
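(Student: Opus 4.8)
The plan is to recognize that both sums are, up to the weight $\log(x/p^\nu)$, partial sums of Dirichlet-series coefficients, and to evaluate them by contour integration (a Perron-type formula with the Cesàro-type kernel $\log(x/y)$, which corresponds to a double pole at $s=0$ after dividing by $s^2$). The key input is that at a fixed prime $p$ the local factor $Z_{F,p}(t)^{-1}$ from \eqref{defZFpt} factors, using the parameters \eqref{defab}, as $(1-at)(1-a^{-1}t)(1-bt)(1-b^{-1}t)$, so that $\sum_{\nu\ge 0}\lambda_F(p^\nu)t^\nu$ is a rational function of $t$ whose denominator has the four roots $a,a^{-1},b,b^{-1}$ (the factor $1/\zeta(2s+1)$ in \eqref{relationaFlambdaF} contributes, at the prime $p$, only the extra factor $(1-p^{-2s-1})$ which is harmless and accounts for the $1/p$ corrections in $C_{F,p}$). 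First I would write down explicitly $D_p(s)\coloneqq\sum_{\nu\ge0}\lambda_F(p^\nu)p^{-\nu s}$ and, for part (i), $E_p(s)\coloneqq\sum_{\nu\ge0}\lambda_F(p^\nu)^2 p^{-\nu s}$; using the Hecke relations (equivalently, decomposing $\lambda_F(p^\nu)$ as a linear combination of $a^{j}b^{\ell}$ via the Weyl character formula for $\mathrm{Sp}_4$), the square $\lambda_F(p^\nu)^2$ is again a finite linear combination of geometric sequences in the monomials $1, a^{\pm2}, b^{\pm2}, (ab)^{\pm1}, (ab^{-1})^{\pm1}$, so $E_p(s)$ is rational in $p^{-s}$ with denominator vanishing exactly at those nine (assumed distinct) points on $\re s=0$; in particular $E_p(s)$ has a \emph{double} pole at $s=0$ coming from the constant term ``$1$'' appearing with multiplicity (from $a\cdot a^{-1}$ and $b\cdot b^{-1}$), which is the source of the main term.

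Next I would apply the truncated Perron formula with the kernel $\tfrac{1}{2\pi i}\int_{(c)} y^{s}\,s^{-2}\,ds = \log y$ for $y\ge1$ (and $0$ otherwise), giving
\begin{equation}
\sum_{\nu\colon p^\nu\le x}\lambda_F(p^\nu)^2\log\frac{x}{p^\nu} = \frac{1}{2\pi i}\int_{(c)} E_p(s)\,\frac{x^{s}}{s^{2}}\,ds,
\end{equation}
and then shift the contour to the left of $\re s=0$. Because $E_p(s)$ is a genuinely rational (hence meromorphic and bounded-on-vertical-lines-away-from-poles) function of $p^{-s}$, there is no convergence issue and one may move the line all the way to, say, $\re s=-1$, picking up residues only at $s=0$ and at the purely imaginary poles $s=s_j$ with $p^{s_j}$ equal to one of the distinct unit-modulus parameters. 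The residue at the double pole $s=0$ produces $\tfrac{C_{F,p}}{\log p}(\log x)^2 + (\text{const})\log x$ — here the constant $C_{F,p}$ is, up to the factor $1/\log p$ from $\frac{d}{ds}p^{-\nu s}$ bookkeeping, just the multiplicity-weighted leading coefficient, which after collecting the $a\leftrightarrow a^{-1}$, $b\leftrightarrow b^{-1}$ pairings and the $\zeta(2s+1)^{-1}$ correction is exactly the expression \eqref{defCFp}; each simple pole $s_j\ne0$ contributes $O_{F,p}(\log x)$ (the $s^{-2}$ makes these terms $\ll \log x$ since $\lvert x^{s_j}\rvert=1$), and the shifted integral on $\re s=-1$ is $O_{F,p}(1)$. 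For part (ii) the same argument with $D_p(s)$ in place of $E_p(s)$ works, but now the distinctness hypothesis on $1,a,a^{-1},b,b^{-1}$ guarantees that $s=0$ is only a \emph{simple} pole of $D_p(s)$ (the value $t=1$ is a simple root of the denominator $(1-at)\cdots$ only if none of $a^{\pm1},b^{\pm1}$ equals $1$ — wait: $t=1$ is a root iff some parameter equals $1$, which is excluded, so in fact $D_p(s)$ is \emph{regular} at $s=0$), so dividing by $s^2$ leaves a double pole with \emph{vanishing} residue-coefficient for $(\log x)^2$, and what remains is the $O_{F,p}(\log x)$ bound.

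The main obstacle is the bookkeeping needed to identify the leading coefficient precisely as \eqref{defCFp}: one must expand $\lambda_F(p^\nu)$ via the $\mathrm{Sp}_4$ Weyl character formula, square it, collect the ``diagonal'' terms $a^j a^{-j}$ and $b^\ell b^{-\ell}$ and the genuinely constant cross terms, track how the $(1-p^{-2s-1})$ factor modifies the residue, and then simplify the resulting rational expression in $t_a,t_b$ using partial fractions of $\bigl[(1-at)(1-a^{-1}t)\bigr]^{-1}$ at $t=1$, namely $(4-t_a^2)^{-1}$ after the substitution $a+a^{-1}=t_a$. Establishing positivity of $C_{F,p}$ is then immediate from the stated hypotheses (the parameters are not all real of modulus $\ne 1$, but for $F\in H_k^*$ one has $\lvert\alpha_{j,p}\rvert=1$ by Weissauer, so $t_a,t_b\in[-2,2]$ and each $(4-t_a^2)^{-1},(4-t_b^2)^{-1}$ is positive, while $(t_a-t_b)^2>0$ by the distinctness of $a^{\pm1}$ from $b^{\pm1}$). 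All other steps — the truncated Perron formula, the contour shift for a rational function of $p^{-s}$, and the $O$-estimates — are routine.
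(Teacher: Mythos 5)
Your overall strategy is the same as the paper's: express the local series \(\mathcal{F}_p(s)=\sum_{\nu\ge0}\lambda_F(p^\nu)^2p^{-\nu s}\) (your \(E_p\)) as a rational function of \(p^{-s}\) via the Pitale--Schmidt expansion (this is Proposition \ref{pro1}), apply the Perron formula with kernel \(x^s/s^2\), and shift the contour, the main term coming from \(s=0\). However, two steps as you describe them are wrong or unjustified. First, the pole at \(s=0\) of \(E_p(s)\) is \emph{simple}, not double: under the distinctness hypothesis, \(\lambda_F(p^\nu)^2\) is a bounded linear combination of \(\eta^\nu\), \(\eta\in\mathcal{D}_F\), and the fact that the constant term arises from several pairings (\(a^ja^{-j}\), \(b^{\ell}b^{-\ell}\), cross terms) only inflates the residue, not the pole order (this is exactly Remark \ref{rem_residue}). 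A genuine double pole of \(E_p\) at \(0\) would make the integrand \(E_p(s)x^s/s^2\) have a pole of order four and would force a main term of size \((\log x)^3\), contradicting \eqref{SecondMoment}; your own sentence is internally inconsistent, since you then assert the residue equals \(\frac{C_{F,p}}{\log p}(\log x)^2+O(\log x)\). The correct mechanism is: simple pole of \(E_p\) at \(0\), pole of order three of the integrand, residue \(\asymp(\log x)^2\).

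Second, the contour shift is not justified as stated. A rational function of \(p^{-s}\) is periodic in \(s\) with period \(2\pi\ic/\log p\), so the singularities are not the nine points you list but the infinite family \(s_{j,\ell}=\ic(\theta_j+2\pi\ell)/\log p\), \(\ell\in\Z\), all on \(\re s=0\). You cannot simply move the whole line to \(\re s=-1\) and collect finitely many residues: you must either truncate at a height \(T\) chosen to avoid these poles and bound the truncation error (the paper takes \(T\asymp\log x\) via the auxiliary angle \(\theta_*\); the \(O(T)\) residues at \(s_{j,\ell}\neq0\) then give the \(O_{F,p}(\log x)\) error), or justify the full shift by a limiting argument over rectangles avoiding the poles and check convergence of the residue series (it does converge, each residue being \(\ll_{F,p}\abs{s_{j,\ell}}^{-2}\), so this variant would even give a better error for that piece); note also that each such residue is \(O(1)\), not ``\(O(\log x)\)'' as you write -- it is the number of poles up to height \(T\) that produces the \(\log x\). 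Finally, the identification of the constant \eqref{defCFp}, which is the bulk of the paper's work (Section \ref{DS}, Lemmas \ref{lem3.2}--\ref{lem3.4}), is deferred in your plan to ``bookkeeping''; your outline of that computation is the right one, but it is not carried out, so \(C_{F,p}\) is not actually verified. Your treatment of part (ii) -- regularity of \(D_p\) at \(s=0\) under the distinctness hypothesis, so only the double pole from \(s^{-2}\) remains and the contribution is \(O_{F,p}(\log x)\) -- is correct and matches the paper's remark that \eqref{FirstMoment} is proved similarly.
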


\begin{remark}
\begin{enumerate}[(i)]
\item Theorem \ref{thm2} shows that there is a very strong cancellation among the \(\lambda_F(p^\nu)\) and there are few \(\nu\) such that \(\lambda_F(p^{\nu})\) is large.
\item Our approach allows us to establish a similar result for \(a_F(p^{\nu})\) (even more easily).
\end{enumerate}
\end{remark}

In order to measure the non vanishing of \(\lambda_F(n)\), we introduce, as in \cite{Serre1981}, 
\[
i_F(n)\coloneqq\max\{j\geq 1 \colon\lambda_F(n+i) = 0 \;\, \text{for \(0<i\leq j\)}\}
\]
with the convention that \(\max \emptyset = 0\). We hope to get non-trivial bound of type 
\[
i_F(n)\ll_F n^{\theta}
\] 
for some \(\theta<1\) and all \(n\geq 1\). Clearly a stronger form of the problem is to find \(y\) as small as possible (as a function of \(x\), say \(y=x^{\theta}\) with \(\theta<1\)) such that
\[
\#\{x<n\leq x+y\colon\lambda_F(n)\neq0\}\gg y,
\]
where the implied constant can depend on \(F\). 

We can prove the following result by using \(\mathscr{B}\)-free number theory as in \cite{KowalskiRobertWu2007, WuZhai2013}.

\begin{theorem}\label{thm3}
Let \(F\) be a Siegel cusp form of genus \(2\) and level \(1\) which is a Hecke eigenform.
\begin{enumerate}[(i)]
\item
For every \(\eps>0\), \(x\geq x_0(F, \eps)\) and \(y\geq x^{7/17+\eps}\), we have
\[
\#\{x<n\leq x+y\colon \text{\(\mu(n)^2=1\) and \(\lambda_F(n)\neq0\)}\}\gg_{F, \eps} y. 
\]
In particular for any \(\eps>0\) and all \(n\geq 1\), we have
\[
i_F(n)\ll_{F, \eps} n^{7/17+\eps}.
\]
\item
For every \(\eps>0\), \(x\geq x_0(F, \eps)\), \(y\geq x^{17/38+100\eps}\) and \(1\leq a\leq q\leq x^\eps\) with \((a, q)=1\), we have
\[
\#\{x<n\leq x+y\colon\text{\(\mu(n)^2=1\), \(n\equiv a \!\!\!\!\! \pmod{q}\) and \(\lambda_F(n)\neq0\)}\}
\gg_{F, \eps} y/q. 
\]
\end{enumerate}
\end{theorem}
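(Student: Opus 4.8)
The plan is to reduce the non-vanishing of \(\lambda_F(n)\) on squarefree integers to the distribution of \(\mathscr{B}\)-free integers in short intervals — and, for part (ii), in short intervals lying in a fixed arithmetic progression — and then to run the \(\mathscr{B}\)-free arguments of Kowalski, Robert and Wu \cite{KowalskiRobertWu2007} and of Wu and Zhai \cite{WuZhai2013} with a suitable set \(\mathscr{B}\).

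First I would record the multiplicativity input. From \eqref{lambdaFnaFn} we have \(\lambda_F(p)=a_F(p)\) for every prime \(p\), and since \(Z_F\) has the Euler product \eqref{defZFsEuler}--\eqref{defZFpt} the coefficients \(a_F\) are multiplicative; for squarefree \(n\) the sum in \eqref{lambdaFnaFn} reduces to the term \(d=1\), so \(\lambda_F(n)=a_F(n)=\prod_{p\mid n}\lambda_F(p)\). Hence, for squarefree \(n\), one has \(\lambda_F(n)\neq0\) exactly when \(\lambda_F(p)\neq0\) for every \(p\mid n\). Putting
\[
\mathscr{B}_F \coloneqq \{p^2 : \lambda_F(p)\neq0\}\cup\{p : \lambda_F(p)=0\},
\]
this says that \(\{n : \mu(n)^2=1,\ \lambda_F(n)\neq0\}\) is precisely the set of \(\mathscr{B}_F\)-free integers. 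The two families defining \(\mathscr{B}_F\) are supported on disjoint sets of primes, so \(\mathscr{B}_F\) consists of pairwise coprime integers \(\geq2\), as \(\mathscr{B}\)-free theory requires.

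Next I would check the two hypotheses needed to apply the cited results: the summability of \(\sum_{b\in\mathscr{B}_F}1/b\) and the sparseness of the ``non-square'' part of \(\mathscr{B}_F\). The squares contribute \(\sum_p p^{-2}<\infty\). For the primes in \(\mathscr{B}_F\): if \(F\) is a Saito--Kurokawa lift then \(Z_F(s)=\zeta(s-\tfrac12)\zeta(s+\tfrac12)L(f,s)\) gives \(\lambda_F(p)=p^{1/2}+p^{-1/2}+\lambda_f(p)\geq p^{1/2}+p^{-1/2}-2>0\), so that part is empty (and \(\mathscr{B}_F\)-free just means squarefree); otherwise \(F\in H_k^*\) and \eqref{KowalskiSaha} yields \(\#\{p\leq t:\lambda_F(p)=0\}\ll_F t/(\log t)^{1+\delta}\) with \(\delta>0\). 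Either way the non-square part of \(\mathscr{B}_F\) has counting function \(\ll_F t/(\log t)^{1+\delta}\), and partial summation converts the exponent \(1+\delta>1\) into the convergence of \(\sum_{b\in\mathscr{B}_F}1/b\); in particular \(\prod_{b\in\mathscr{B}_F}(1-1/b)>0\). This is exactly the class of sets \(\mathscr{B}\) — squares of primes together with a set of primes of counting function \(\ll t/(\log t)^{\kappa}\), \(\kappa>1\) — handled by \cite{KowalskiRobertWu2007, WuZhai2013}.

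Finally I would quote the conclusions. For part (i), running the short-interval \(\mathscr{B}\)-free argument of \cite{KowalskiRobertWu2007} with \(\mathscr{B}=\mathscr{B}_F\) gives \(\#\{x<n\leq x+y : n\ \text{is}\ \mathscr{B}_F\text{-free}\}\gg_{F,\eps}y\) for \(y\geq x^{7/17+\eps}\) and \(x\geq x_0(F,\eps)\), which is the first displayed bound. The estimate for \(i_F(n)\) then follows: if \(i_F(n)\geq j\) then \((n,n+j]\) contains no integer \(m\) with \(\lambda_F(m)\neq0\), a fortiori none that is also squarefree, so part (i) with \(x=n\), \(y=j\) forces \(j<n^{7/17+\eps}\) once \(n\) is large, and enlarging the implied constant covers the remaining \(n\). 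For part (ii) I would run instead the analogous argument in progressions of \cite{WuZhai2013}: for \((a,q)=1\) and \(1\leq q\leq x^\eps\) it produces \(\gg_{F,\eps}y/q\) integers \(n\in(x,x+y]\) with \(n\equiv a\pmod q\) that are \(\mathscr{B}_F\)-free, once \(y\geq x^{17/38+100\eps}\); the progression condition is compatible with \(\mathscr{B}_F\)-freeness because \((a,q)=1\) forces \(p\nmid n\) for every \(p\mid q\), so no element of \(\mathscr{B}_F\) supported at such a prime can divide \(n\). The real work sits inside the cited \(\mathscr{B}\)-free arguments: bounding, in a short interval, the number of integers divisible by \(p^2\) for \(p\) running over dyadic blocks up to \(\sqrt{x}\) (the Filaseta--Trifonov-type divided-difference and exponential-sum analysis that pins down the exponents \(7/17\) and \(17/38\)), while showing that the sparse ``bad prime'' part of \(\mathscr{B}_F\) contributes negligibly — and it is precisely here that the Kowalski--Saha exponent \(1+\delta\), rather than a bare \(1\), is needed. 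The Siegel-specific input reduces to the elementary translation of the first two paragraphs together with the positivity and density facts of the third.
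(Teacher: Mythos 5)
Your proposal is correct and follows essentially the same route as the paper: the same set \(\mathscr{B}_F=\{p:\lambda_F(p)=0\}\cup\{p^2:\lambda_F(p)\neq0\}\), the same verification of the Erd\H os conditions via the Kowalski--Saha bound \eqref{KowalskiSaha}, and the same appeal to \cite[Corollary 10]{KowalskiRobertWu2007} and \cite[Proposition 2]{WuZhai2013}. The extra Saito--Kurokawa case split and the set-equality remark are harmless refinements of what the paper does.
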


\begin{remark}
According to \eqref{lambdaFnaFn}, we have \(\lambda_F(n)=a_F(n)\) for all square free integers \(n\). The results of Theorem \ref{thm3} also hold if we replace \(\lambda_F(n)\) by \(a_F(n)\).
\end{remark}

\begin{remark}
Theorem \ref{thm3}(i) improves considerably a recent result of Das, Kohnen and Sengupta  \cite[Corollary 1.5]{MR3182539}, which requires $\tfrac{31}{32}$ in place of $\tfrac{7}{17}$.
\end{remark}


\section{Proof of Theorem \ref{thm1}}

Firstly we state two results on mean values of multiplicative functions. The first one is a particular case of \cite[Theorem]{HallTenenbaum1991}. In the following, \(p\) stands always for a prime number. %

\begin{lemma}\label{HT1991}
Let \(g\) be a real multiplicative function such that \(\abs{g(n)}\leq 1\). Then we have
\[
\sum_{n\leq x} g(n)\ll x \exp\bigg\{-K\sum_{p\leq x} \frac{1-g(p)}{p}\bigg\}
\]
where \(K = 0.32867 \dots = -\cos\phi_0\) and \(\phi_0\) is the unique root in \((0, \pi)\) of the equation \(\sin\phi-\phi\cos\phi=\frac{1}{2}\pi\).
\end{lemma}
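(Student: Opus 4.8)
The plan is to obtain Lemma~\ref{HT1991} as the real-valued special case of the general effective mean value estimate of Hall and Tenenbaum \cite{HallTenenbaum1991}; below I sketch the mechanism and indicate where the constant $K$ comes from. Write $L\coloneqq\log x$, $M\coloneqq\sum_{p\leq x}(1-g(p))/p$, and $G(s)\coloneqq\sum_{n\geq1}g(n)n^{-s}$ for $\re s>1$. When $M=O(1)$ the assertion is trivial, since $\bigl|\sum_{n\leq x}g(n)\bigr|\leq x$, so assume $M$ is large. The first step is a Hal\'asz-type reduction of the partial sum to the Dirichlet series: via Perron's formula together with Hal\'asz's device --- which uses $|g(n)|\leq1$, hence $|g(p)|\leq1$, through an $L^2$ estimate on the vertical line $\re s=1+1/L$ --- one bounds $\frac{1}{x}\bigl|\sum_{n\leq x}g(n)\bigr|$ in terms of $\max_{|t|\leq T}|G(1+1/L+it)|$ up to a small error, and $\log|G(1+1/L+it)|\leq\re\sum_p g(p)\,p^{-1-1/L-it}+O(1)$ ties everything to the cancellation present in $\sum_p g(p)\,p^{-1-it}$.

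The second step exploits that $g$ is real. For a real $g$ bounded by $1$, the minimum over $t$ of the pretentious distance $\sum_{p\leq x}\bigl(1-\re(g(p)p^{-it})\bigr)/p$ is comparable, up to a bounded additive term and a bounded constant factor, to its value $M$ at $t=0$ (the standard device being a squaring argument that exploits $g(p)^2\geq0$); a complex $g$ pretending to be $n^{i\tau}$ has no such real analogue. Consequently the analysis effectively reduces to controlling $|G|$ near the single point $s=1+1/L$, and --- this is the delicate point --- one can do so \emph{without} the polynomial-in-$M$ loss that a soft argument incurs, leaving a clean exponential $e^{-cM}$; the best admissible exponent is $c=K$.

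The value of $K$ itself comes from an extremal problem. Grouping the primes into logarithmic scales $p\asymp x^u$ with $0<u\leq1$, and writing $\psi(u)$ for the mean of $g(p)$ on that scale, the estimate produced by the first two steps takes the shape $\frac{1}{x}\bigl|\sum_{n\leq x}g(n)\bigr|\ll\Phi(1)$, where $\Phi$ solves a Dickman/Volterra-type integral equation driven by $\psi$, while $M\approx\int_0^1(1-\psi(u))\,\frac{du}{u}$. Maximising $|\Phi(1)|$ for a fixed value of $M$ is a one-dimensional calculus-of-variations problem whose Euler--Lagrange equation forces the extremal $\psi$ to be assembled from $\cos$; the threshold case is then governed precisely by $\sin\phi-\phi\cos\phi=\tfrac{1}{2}\pi$, with optimal decay rate $K=-\cos\phi_0$. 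Verifying that no admissible $g$ does better yields the stated inequality for all $x\geq2$.

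The main obstacle is the third step: producing the \emph{sharp} constant rather than merely some positive one (a crude Hal\'asz bound gives the latter at once). This needs both that the real-valued reduction of the second step lose nothing in the exponent and that the variational problem be solved explicitly and verified to be optimal --- which is precisely the content of \cite{HallTenenbaum1991}, from which Lemma~\ref{HT1991} follows as the case of a real multiplicative $g$ with $|g|\leq1$.
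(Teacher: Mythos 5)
Your proposal is correct and matches the paper's treatment: the paper proves nothing here, simply invoking the lemma as a particular case of the Hall--Tenenbaum theorem \cite{HallTenenbaum1991}, which is exactly the reduction you make (your additional sketch of the Hal\'asz-type argument and the extremal problem yielding \(K=-\cos\phi_0\) is an exposition of that cited proof, not a new route).
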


The second lemma is an unpublished result of Halberstam. A complete proof has been given by Song \cite[Theorem A]{Song2001}. A more general result has been proved by Liu and Wu \cite[Theorem 2]{LiuWu2014}.

\begin{lemma}\label{LW2014}
Let \(g\) be a non-negative multiplicative function satisfying the following conditions 
\begin{align}
\sum_{p\leq z} g(p)\log p &= \kappa z + O\bigg(\frac{z}{(\log z)^{\delta}}\bigg) \quad (z\geq 2),
\label{TWcondition1}
\\
\sum_{p, \, \nu\geq 2} \frac{g(p^\nu)}{p^\nu} \log p^\nu&\leq A,
\label{TWcondition2}
\end{align}
where \(A>0\), \(\kappa>0\) and \(\delta>0\) are constants.
Then we have
\begin{equation}\label{AsymptoticSfx}
\sum_{n\leq x} g(n) = C_gx(\log x)^{\kappa-1} \bigg\{1
+ O_{g, \delta}\bigg(\frac{\log_2x}{\log x} + \frac{1}{(\log x)^{\delta}}\bigg)\bigg\},
\end{equation}
where
\[
C_g \coloneqq\prod_p \bigg(1-\frac{1}{p}\bigg)^\kappa 
\sum_{\nu\geq 0} \frac{g(p^{\nu})}{p^{\nu}}. %
\]
\end{lemma}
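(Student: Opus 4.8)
The plan is to prove Lemma~\ref{LW2014} as a Selberg--Delange/Wirsing-type mean-value theorem, by reducing it to a renewal (integro-differential) equation for the summatory function $S(x)\coloneqq\sum_{n\le x}g(n)$, with \eqref{TWcondition1} playing the role of a ``prime number theorem for $g$'' and \eqref{TWcondition2} absorbing the prime-power contributions. First I would record two expressions for $T(x)\coloneqq\sum_{n\le x}g(n)\log n$. Partial summation (using $\log n=\int_1^n dt/t$) gives $T(x)=S(x)\log x-\int_1^x S(t)t^{-1}\,dt$. On the other hand, from $\log n=\sum_{p^\nu\|n}\nu\log p$ and multiplicativity, writing $n=p^\nu m$ with $p\nmid m$,
\[
T(x)=\sum_p\log p\sum_{\nu\ge1}\nu\,g(p^\nu)\sum_{\substack{m\le x/p^\nu\\ p\nmid m}} g(m).
\]
The $\nu=1$ part equals $\sum_{p\le x}g(p)\log p\,S(x/p)$ up to a remainder arising from the restriction $p\nmid m$, while the $\nu\ge2$ terms form a further remainder; since $g\ge0$, both remainders can be bounded in aggregate using \eqref{TWcondition2} together with an a~priori size estimate for $S$. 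Inserting \eqref{TWcondition1} as $W(t)\coloneqq\sum_{p\le t}g(p)\log p=\kappa t+E(t)$ with $E(t)\ll t(\log t)^{-\delta}$, and using the change of variable $\int_1^x S(x/t)\,dt=x\int_1^x S(u)u^{-2}\,du$, one arrives at
\[
S(x)\log x=\int_1^x\frac{S(t)}{t}\,dt+\kappa x\int_1^x\frac{S(u)}{u^2}\,du+R(x),
\]
where $R(x)$ collects the two remainders and the Stieltjes term $\int_1^x S(x/t)\,dE(t)$; the last of these, estimated by partial summation, produces the $(\log x)^{-\delta}$ in the final error term.

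Before using this relation quantitatively I would prove the a~priori bound $S(x)\asymp x(\log x)^{\kappa-1}$: the upper bound comes from the classical Wirsing--Halberstam--Richert estimate for non-negative multiplicative functions, which requires only \eqref{TWcondition1} and \eqref{TWcondition2} (via $\sum_{p\le x}g(p)/p=\kappa\log_2 x+O(1)$, itself a partial-summation consequence of \eqref{TWcondition1}), and the matching lower bound then follows from the functional relation itself. With $S(x)\ll x(\log x)^{\kappa-1}$ available, one gets $R(x)\ll x(\log x)^{\kappa-1}\bigl(\log_2 x/\log x+(\log x)^{-\delta}\bigr)$. Now set $\Phi(x)\coloneqq\int_1^x S(u)u^{-2}\,du$, so that $S(x)=x^2\Phi'(x)$ away from the integers and $\int_1^x S(t)t^{-1}\,dt=x\Phi(x)-\int_1^x\Phi(t)\,dt$; the relation becomes
\[
x^2\Phi'(x)\log x=(\kappa+1)x\Phi(x)-\int_1^x\Phi(t)\,dt+R(x).
\]
The ansatz $\Phi(x)=\frac{C}{\kappa}(\log x)^\kappa$ satisfies the leading-order balance and gives $S(x)=C\,x(\log x)^{\kappa-1}(1+o(1))$. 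Since the homogeneous equation leaves the leading coefficient free, I would pin down $C$ by the classical Abelian argument: from $\sum_n g(n)n^{-s}=s\int_1^\infty S(x)x^{-s-1}\,dx$ the provisional asymptotic forces $\lim_{s\to1^+}(s-1)^\kappa\sum_n g(n)n^{-s}=C\,\Gamma(\kappa)$, while the Euler product $\sum_n g(n)n^{-s}=\zeta(s)^\kappa\prod_p(1-p^{-s})^\kappa\sum_{\nu\ge0}g(p^\nu)p^{-\nu s}$ evaluates the same limit as $\prod_p(1-\tfrac1p)^\kappa\sum_{\nu\ge0}g(p^\nu)p^{-\nu}$; this determines the constant in \eqref{AsymptoticSfx}. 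Finally the sharp error is reached by bootstrapping: feeding the provisional asymptotic back into $R(x)$ and iterating, one saves a factor $\log_2 x/\log x$ at each pass until the $(\log x)^{-\delta}$ barrier inherited from $E(t)$ is met.

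The main obstacle is twofold. First, the remainders in the reduction have to be controlled \emph{without any pointwise bound on $g(p)$} --- only the average \eqref{TWcondition1} is at hand --- so the ``$p\nmid m$'' correction and the $\nu\ge2$ terms must be handled in aggregate against the a~priori size of $S$; this is where \eqref{TWcondition2} is indispensable, and where the estimates are most delicate. Second, arranging the bootstrap so that the accumulated $\log_2 x/\log x$ losses and the irreducible $(\log x)^{-\delta}$ loss coming from $E(t)$ combine into precisely the error term of \eqref{AsymptoticSfx} requires careful bookkeeping at each iteration. An alternative to this real-variable argument is the contour-integral (Landau--Selberg--Delange) method: write $\sum_n g(n)n^{-s}=\zeta(s)^\kappa H(s)$, use \eqref{TWcondition1} to continue $H$ slightly past $\re s=1$ with a limited amount of smoothness (the $(\log x)^{-\delta}$ reflecting that only about $\delta$ ``derivatives'' are effectively available), and evaluate by a Hankel contour around $s=1$; but the renewal-equation route above is more self-contained and is the one I would write up.
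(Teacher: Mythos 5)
For the record, the paper does not prove this lemma at all: it is quoted as an unpublished result of Halberstam, with published proofs by Song (Theorem A) and Liu--Wu (Theorem 2), and those proofs do run along the renewal-equation lines you outline, so your overall strategy is the standard one. Your sketch, however, has a genuine gap at the decisive step. After arriving at the approximate relation $S(x)\log x=\kappa x\int_1^x S(u)u^{-2}\,\mathrm{d}u+\int_1^x S(t)t^{-1}\,\mathrm{d}t+R(x)$, you observe that the ansatz $\Phi(x)=\tfrac{C}{\kappa}(\log x)^{\kappa}$ balances the equation to leading order and conclude that $S(x)=Cx(\log x)^{\kappa-1}(1+o(1))$ for some $C$. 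That inference is precisely what has to be proved: a trial function satisfying the equation to leading order says nothing about the behaviour of the actual solution, and the substance of every Wirsing/Halberstam-type theorem is the stability argument showing that $S$ cannot drift away from the ansatz --- e.g.\ the Levin--Fainleib device of converting the relation into a two-sided differential inequality for $\Phi$ and integrating it, or an iteration proving that $\limsup$ and $\liminf$ of $S(x)/(x(\log x)^{\kappa-1})$ coincide. Your Abelian argument only identifies the limit \emph{if it exists}, so it cannot replace this step; likewise ``the matching lower bound follows from the functional relation itself'' is asserted, not proved (one needs a crude seed such as $S(x)\ge(\log x)^{-1}\sum_{p\le x}g(p)\log p\gg x/\log x$ before any Gronwall-type integration can start). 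Until this is supplied you have a plan, not a proof.

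Two further points. Your own constant identification yields $C=C_g/\Gamma(\kappa)$, not $C_g$: the provisional asymptotic forces $\lim_{s\to1^+}(s-1)^{\kappa}\sum_n g(n)n^{-s}=C\,\Gamma(\kappa)$, while the Euler-product limit equals $\prod_p(1-1/p)^{\kappa}\sum_{\nu\ge0}g(p^{\nu})p^{-\nu}$. You should have flagged this mismatch instead of saying it ``determines the constant in \eqref{AsymptoticSfx}'': in fact the statement as printed omits the factor $1/\Gamma(\kappa)$ (test $g=d_3$, $\kappa=3$: then $C_g=1$ but $\sum_{n\le x}d_3(n)\sim\tfrac12x(\log x)^2$), a typo which is harmless in the paper since the lemma is applied only with $\kappa=1$, but which your write-up silently reproduces while your own computation contradicts it. Finally, the obstacle you emphasize --- that no pointwise bound on $g(p)$ is available --- is not real: non-negativity together with \eqref{TWcondition1} applied at $z=p$ and $z=p-1$ gives $g(p)\log p\ll p/(\log p)^{\delta}$, and this pointwise bound is exactly what one uses, together with \eqref{TWcondition2} and the a priori upper bound for $S$, to control the $p\nmid m$ correction and the $\nu\ge2$ terms in $R(x)$.
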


\begin{lemma}\label{LBWY}
Let \(F\) be a non-zero Siegel-Hecke eigenform in \(S_k\) and suppose that either \(k\) is odd, or \(k\) is even and \(F\) is in the orthogonal complement of \(S_k^M\). Then there are two positive constants \(c_F>0\) and \(x_0(F)>0\) depending on \(F\) only such that
\begin{align}\label{LB1/32}
\sum_{\substack{p\leq x\\ \lambda_F(p)>0}} \log p&\geq \frac{1}{32} x + O_F\big(x\ee^{-c_F\sqrt{\log x}}\big)\\
\shortintertext{and} %
\sum_{\substack{p\leq x\\ \lambda_F(p)<0}} \log p&\geq \frac{1}{32} x + O_F\big(x\ee^{-c_F\sqrt{\log x}}\big)\\
\end{align}
for all \(x\geq x_0(F)\).
\end{lemma}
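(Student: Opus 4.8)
The plan is to combine a first-moment and a second-moment estimate for $\lambda_F(p)$ over primes with the Ramanujan bound, in the spirit of the classical ``positive proportion from moments'' argument. Since $F\in H_k^*$, the conjecture~\eqref{RPConjecture} holds for $F$ by Weissauer, so by~\eqref{RP-Conjecture} and $\lambda_F(p)=a_F(p)$ (a special case of~\eqref{lambdaFnaFn}) one has $\abs{\lambda_F(p)}\leq d_4(p)=4$ for every prime $p$. Write $S^{+}(x)=\sum_{p\leq x,\ \lambda_F(p)>0}\log p$ and $S^{-}(x)=\sum_{p\leq x,\ \lambda_F(p)<0}\log p$ for the quantities to be bounded below.

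First I would prove the first-moment bound $\sum_{p\leq x}\lambda_F(p)\log p\ll_F x\,\ee^{-c_F\sqrt{\log x}}$. By~\eqref{relationaFlambdaF} we have $\sum_{n\geq1}\lambda_F(n)n^{-s}=Z_F(s)/\zeta(2s+1)$; since $F$ is not in the Maass space, $Z_F$ is entire and non-vanishing on $\re s=1$, so that $Z_F(s)/\zeta(2s+1)$ is holomorphic and zero-free in a de la Vall\'ee Poussin region $\re s\geq 1-c_F/\log(\abs{t}+2)$. Applying the standard explicit-formula argument to the logarithmic derivative of $Z_F(s)/\zeta(2s+1)$, whose Dirichlet coefficient at $n=p$ equals $\lambda_F(p)\log p$, and discarding the prime-power terms (which contribute $\ll\sqrt{x}\,(\log x)^2$ by the Ramanujan bound) yields the claim.

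Next I would prove the second-moment estimate $\sum_{p\leq x}\lambda_F(p)^2\log p= x+O_F\bigl(x\,\ee^{-c_F\sqrt{\log x}}\bigr)$. Writing $\beta_1(p),\dots,\beta_4(p)$ ($=a,a^{-1},b,b^{-1}$ in the notation~\eqref{defab}) for the spin Satake parameters, one has $\lambda_F(p)=\sum_j\beta_j(p)$, hence $\lambda_F(p)^2=\sum_{i,j}\beta_i(p)\beta_j(p)$ is precisely the $p$-th Dirichlet coefficient of the Rankin--Selberg square $Z_F\otimes Z_F=\prod_p\prod_{i,j}\bigl(1-\beta_i(p)\beta_j(p)p^{-s}\bigr)^{-1}$. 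Because $F$ has level one and is not of Saito--Kurokawa type, $\mathrm{spin}\otimes\mathrm{spin}$ decomposes as the trivial representation, the degree-$5$ standard representation, and the $10$-dimensional adjoint representation, so $Z_F\otimes Z_F$ factors as $\zeta(s)$ times the standard $L$-function of $F$ times the symmetric-square ($=$ adjoint) $L$-function of $F$, the last two of which are entire and non-vanishing on $\re s=1$. Thus $Z_F\otimes Z_F$ has a simple pole at $s=1$ and is otherwise holomorphic and zero-free in a classical region, and a prime-number-theorem argument gives the asserted asymptotic, the leading term $x$ coming from the simple pole.

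Finally I would combine the two. Put $M^{+}(x)=\sum_{p\leq x,\ \lambda_F(p)>0}\lambda_F(p)\log p$ and $M^{-}(x)=\sum_{p\leq x,\ \lambda_F(p)<0}\abs{\lambda_F(p)}\log p$. From $\abs{\lambda_F(p)}\leq4$ we get $\lambda_F(p)^2\leq4\abs{\lambda_F(p)}$, so $x+O_F\bigl(x\,\ee^{-c_F\sqrt{\log x}}\bigr)=\sum_{p\leq x}\lambda_F(p)^2\log p\leq 4\bigl(M^{+}(x)+M^{-}(x)\bigr)$, while the first-moment bound gives $M^{+}(x)-M^{-}(x)=\sum_{p\leq x}\lambda_F(p)\log p\ll_F x\,\ee^{-c_F\sqrt{\log x}}$. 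Eliminating $M^{-}(x)$ gives $8M^{+}(x)\geq x+O_F\bigl(x\,\ee^{-c_F\sqrt{\log x}}\bigr)$, and symmetrically for $M^{-}(x)$, so $M^{\pm}(x)\geq\tfrac18 x+O_F\bigl(x\,\ee^{-c_F\sqrt{\log x}}\bigr)$. Since $0<\lambda_F(p)\leq4$ on the relevant ranges, $M^{+}(x)\leq4S^{+}(x)$ and $M^{-}(x)\leq4S^{-}(x)$, whence $S^{\pm}(x)\geq\tfrac1{32}x+O_F\bigl(x\,\ee^{-c_F\sqrt{\log x}}\bigr)$, which is exactly the claimed bound, the constant $\tfrac1{32}=\tfrac14\cdot\tfrac18$ being what this argument produces. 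The main obstacle is assembling the analytic input for the two moments --- entirety, non-vanishing on $\re s=1$, and a classical zero-free region for $Z_F$ as well as for the standard and adjoint $L$-functions of $F$ --- which for $F\in H_k^*$ of level one is provided by the theory of the spinor and standard $L$-functions of genus-two Siegel forms (Andrianov, B\"ocherer, Weissauer) together with the standard non-vanishing of automorphic $L$-functions on $\re s=1$; granting this, the remaining steps are routine.
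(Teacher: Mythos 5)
Your combinatorial skeleton is exactly the paper's: writing \(\lambda_F^{\pm}(p)=\tfrac12\big(\abs{\lambda_F(p)}\pm\lambda_F(p)\big)\), the bound \(\lambda_F(p)^2\leq 4\abs{\lambda_F(p)}\) (from \(\abs{\lambda_F(p)}\leq d_4(p)=4\)) turns the second moment into \(\sum_{p\leq x}\abs{\lambda_F(p)}\log p\geq\tfrac14 x+O_F\big(x\ee^{-c_F\sqrt{\log x}}\big)\), the first-moment bound then gives \(\sum_{p\leq x}\lambda_F^{\pm}(p)\log p\geq\tfrac18 x+O_F\big(x\ee^{-c_F\sqrt{\log x}}\big)\), and \(0\leq\lambda_F^{\pm}(p)\leq4\) yields the stated \(\tfrac1{32}\); your eliminations and constants coincide with the paper's. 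The only real difference is how the two prime sums are justified: the paper obtains \eqref{RankinSelbergPNTSiegelForm} and \eqref{PNTSiegelForm} in one stroke by quoting \cite[Theorem 5.1.2]{PSS2012} (the transfer of \(F\) to a self-contragredient unitary cuspidal automorphic representation of \(\GL_4\)) together with \cite[Theorem 3]{WuYe2007}, which supplies both the PNT and the Rankin--Selberg PNT with the error term \(x\ee^{-c_F\sqrt{\log x}}\).

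Your proposed direct derivation of those two estimates is where the gaps lie. A de la Vall\'ee Poussin zero-free region for \(Z_F\) is not available from the classical Siegel-form sources you invoke (Andrianov, B\"ocherer, Weissauer); it comes from viewing \(Z_F\) as a degree-\(4\) \(\GL_4\) \(L\)-function via the transfer, and even then, since that representation is self-dual, a possible exceptional real zero has to be excluded or controlled --- were such a zero present, your first-moment bound, and with it one of the two one-sided estimates, would fail. Similarly, your decomposition of \(Z_F\otimes Z_F\) into \(\zeta(s)\), the degree-\(5\) standard factor and the degree-\(10\) adjoint factor is correct as a factorization, but the holomorphy, the nonvanishing on \(\re s=1\) and a classical zero-free region for the degree-\(10\) factor are not classical facts; the workable route is Rankin--Selberg theory for \(\Pi\times\widetilde{\Pi}\) on \(\GL_4\times\GL_4\) after the transfer, where Hypothesis H (known for \(\GL_4\)) also enters. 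These are precisely the difficulties that the paper's citation of \cite{WuYe2007} is designed to absorb, so your argument becomes complete once the sourcing of the two moment estimates is replaced by the transfer \cite{PSS2012} plus Wu--Ye (or those analytic inputs are established in full).
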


\begin{proof}
According to \cite[Theorem 5.1.2]{PSS2012}, the transfer of \(F\) is an irreductible unitary cuspidal and self-contragredient automorphic representation of \(\GL_4(\Q)\). Thus we can apply \cite[Theorem 3]{WuYe2007} to write
\begin{align}
\sum_{p\leq x} \lambda_F(p)^2 \log p & = x + O_F\big(x\ee^{-c_F\sqrt{\log x}}\big),
\label{RankinSelbergPNTSiegelForm}
\\
\sum_{p\leq x} \lambda_F(p) \log p& \ll_F x \ee^{-c_F\sqrt{\log x}}
\label{PNTSiegelForm}
\end{align}
for all \(x\geq 2\). In view of \eqref{RP-Conjecture} and the fact that \(d_4(p)=4\), it is clear that \(\abs{\lambda_F(p)}\geq \lambda_F(p)^2/4\). Thus \eqref{RankinSelbergPNTSiegelForm} implies immediately
\begin{equation}\label{LBpi1}
\sum_{p\leq x}\abs{\lambda_F(p)}\log p\geq \tfrac{1}{4}x + O_F\big(x \ee^{-c_F\sqrt{\log x}}\big)\qquad (x\geq x_0(F)).
\end{equation}
Defining
\[
\lambda_F^{\pm}(p) = \frac{\abs{\lambda_F(p)}\pm \lambda_F(p)}{2},
\]
the relations \eqref{LBpi1} and \eqref{PNTSiegelForm} imply that
\[
\sum_{p\leq x} \lambda_F^{\pm}(p) \log p
\geq \tfrac{1}{8} x + O_F\big(x \ee^{-c_F\sqrt{\log x}}\big)
\qquad
(x\geq x_0(F)).
\]
This implies \eqref{LB1/32} since \(0\leq \lambda_F^{\pm}(p)\leq 4\) and %
\[
\lambda_F^{+}(p)\neq0\Leftrightarrow\lambda_F(p)>0\;\text{ and }\;\lambda_F^{-}(p)\neq0\Leftrightarrow\lambda_F(p)<0. %
\]
\end{proof}

Now we are ready to prove Theorem \ref{thm1}. Define
\[
g_F(n) \coloneqq
\begin{cases}
\sgn(\lambda_F(n)) & \text{if \(\lambda_F(n)\neq0\)},
\\
0 & \text{otherwise}.
\end{cases}
\]
By the prime number theorem and \eqref{KowalskiSaha}, we have
\begin{align*}
\sum_{p\leq x}\abs{g_F(p)}\log p 
& = \sum_{p\leq x} \log p - \sum_{\substack{p\leq x\\ \lambda_F(p)=0}} \log p  
\\
& = x +O\bigg(\frac{x}{(\log x)^{\delta}}\bigg).
\end{align*}
This shows that the function \(\abs{g_F}\) satisfies condition \eqref{TWcondition1} of Lemma \ref{LW2014} with \(\kappa=1\). Condition \eqref{TWcondition2} is satisfied trivially. We obtain
\begin{equation}\label{LW}
\sum_{n\leq x}\abs{g_F(n)}= \rho_F x \bigg\{1+O\bigg(\frac{1}{(\log x)^{\delta}}\bigg)\bigg\}.
\end{equation}
This proves \eqref{NonVanishing}, since \(\abs{g_F(n)}=\delta_F(n)\).

On the other hand,  \eqref{LB1/32} of Lemma \ref{LBWY} allows us to deduce
\begin{equation}\label{Das}
\begin{aligned}
\sum_{\substack{p\leq x\\ g_F(p)=-1}} \frac{1}{p}
&= \int_{2-}^x \frac{1}{t\log t} \dd \Big(\sum_{\substack{p\leq t\\ \lambda_F(p)<0}} \log p\Big)
\\
& \geq \frac{1}{32}\log_2x + O_F(1)
\quad
(x\to\infty).
\end{aligned}
\end{equation}
From Lemma \ref{HT1991} and \eqref{Das}, we can deduce that
\begin{equation}\label{HT}
\begin{aligned}
\sum_{n\leq x} g_F(n)
& \ll x\exp\bigg\{-K\sum_{p\leq x} \frac{1-g_F(p)}{p}\bigg\}
\\
& \ll x\exp\bigg\{-2K\sum_{\substack{p\leq x\\ g_F(p)=-1}} \frac{1}{p}\bigg\}
\\
& \ll \frac{x}{(\log x)^{K/16}}. %
\end{aligned}
\end{equation}
Clearly \eqref{LW} and \eqref{HT} imply the required result, since \(\abs{g_F(n)}=\delta_F(n)\) and
\[
\frac{\abs{g_F(n)}+ g_F(n)}{2}
= \begin{cases}
1 & \text{if \(\lambda_F(n)\geq 0\)},
\\
0 & \text{otherwise}.
\end{cases}
\]
and
\[
\frac{\abs{g_F(n)}-g_F(n)}{2}
= \begin{cases}
1 & \text{if \(\lambda_F(n)\leq 0\)},
\\
0 & \text{otherwise}.
\end{cases}
\]
This completes the proof.

\vskip 8mm

\section{Dirichlet series associated to \(\lambda_F(p^\nu)^2\) and \(a_F(p^\nu)^2\)}\label{DS}

As before, let \(F\) be a non zero eigenfunction of all the Hecke operators \(T(n)\), (\(n\in \N\)) and let \(a, a^{-1}, b, b^{-1}\) be defined as in \eqref{defab}. We introduce
\begin{equation}\label{defDF}
\mathcal{D}_F \coloneqq\big\{1, a^2, a^{-2}, b^2, b^{-2}, ab, (ab)^{-1}, ab^{-1}, a^{-1}b\big\}.
\end{equation}
In view of \eqref{RamanujanConjecture}, the Dirichlet series
\begin{equation}\label{defFps}
\mathcal{F}_{p}(s) \coloneqq\sum_{\nu=0}^{\infty} \frac{\lambda_F(p^\nu)^2}{p^{\nu s}}
\end{equation}
is absolutely convergent for \(\re s>0\) and any prime number \(p\). The aim of this section is to give an explicit expression for this Dirichlet series.

\begin{proposition}\label{pro1}
Under the previous notation, we have
\[
\mathcal{F}_{p}(s) 
= (1+p^{-s}) \prod_{\eta\in\mathcal{D}_F}(1-\eta p^{-s})^{-1} \sum_{0\leq i\leq 6} q_i p^{-is}
\]
for \(\re s>0\), where \(t_a, t_b\) are defined as in \eqref{deftatb} and
\begin{align*}
q_0
& \coloneqq 1,
\\
q_1
& \coloneqq t_at_b+2,
\\
q_2 
&\coloneqq 2-(t_a+t_b)^2-2(t_a^2+t_b^2+t_at_b-2)p^{-1}+p^{-2},
\\
q_3
& \coloneqq t_at_b+2+2\left[(t_a+t_b)^2+(t_a^2-2)(t_b^2-2)\right]p^{-1}+(t_at_b+2)p^{-2},
\\
q_4
& \coloneqq 1-2(t_a^2+t_b^2+t_at_b-2)p^{-1}-\left[(t_a+t_b)^2-2\right]p^{-2},
\\
q_5
& \coloneqq(t_at_b+2)p^{-2},
\\
q_6
& \coloneqq p^{-2}. 
\end{align*}
\end{proposition}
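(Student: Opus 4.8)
The proof is a computation in the local Euler factor at a fixed prime $p$, carried out as an identity of rational functions of $t\coloneqq p^{-s}$; the convergence for $\re s>0$ that turns $\mathcal{F}_p$ into a genuine Dirichlet series follows from \eqref{RamanujanConjecture} but is not needed to establish the formula. First, combining \eqref{defZFpt}, \eqref{defab} and \eqref{deftatb},
\[
Z_{F,p}(z)^{-1}=(1-az)(1-a^{-1}z)(1-bz)(1-b^{-1}z)=(1-t_az+z^2)(1-t_bz+z^2),
\]
so $Z_{F,p}$ is locally the product of two $\GL_2$-type factors. A partial-fraction decomposition in $z$ (using $(1-t_bz+z^2)-(1-t_az+z^2)=(t_a-t_b)z$) gives $a_F(p^\nu)=(u_{\nu+1}-v_{\nu+1})/(t_a-t_b)$, where $u_\nu\coloneqq(a^{\nu+1}-a^{-\nu-1})/(a-a^{-1})$ and $v_\nu$ is its analogue for $b$ (Chebyshev polynomials of $t_a/2$, $t_b/2$, hence polynomials in $t_a$, resp.\ $t_b$). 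Plugging this into \eqref{lambdaFnaFn} yields $\lambda_F(p^\nu)=(w_\nu^{(a)}-w_\nu^{(b)})/(t_a-t_b)$ with $w_\nu^{(a)}\coloneqq u_{\nu+1}-p^{-1}u_{\nu-1}$, and summing geometric series gives $\sum_{\nu\ge0}w_\nu^{(a)}z^\nu=(t_a-(1+p^{-1})z)/(1-t_az+z^2)$.

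Squaring and summing,
\[
\mathcal{F}_p(s)=\frac{\Sigma_{aa}(t)-2\,\Sigma_{ab}(t)+\Sigma_{bb}(t)}{(t_a-t_b)^2},\qquad \Sigma_{xy}(t)\coloneqq\sum_{\nu\ge0}w_\nu^{(x)}w_\nu^{(y)}t^\nu,
\]
which already accounts for the prefactor $(t_a-t_b)^{-2}$ of \eqref{defCFp}. Each $\Sigma_{xy}$ is evaluated by a Hadamard-product contour integral $\frac1{2\pi\ic}\oint_{|z|=r}\bigl(\sum_\nu w_\nu^{(x)}z^\nu\bigr)\bigl(\sum_\nu w_\nu^{(y)}(t/z)^\nu\bigr)\frac{\dd z}{z}$, whose integrand (after clearing denominators) has exactly two simple poles inside the contour; summing the residues gives explicit rational functions. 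One finds that $\Sigma_{aa}$ depends only on $t_a$, $p^{-1}$ and $t$, with denominator $(1-t)(1-a^2t)(1-a^{-2}t)=(1-t)\bigl(1-(t_a^2-2)t+t^2\bigr)$; that $\Sigma_{bb}$ is the same with $t_a\mapsto t_b$; and that $\Sigma_{ab}$ depends on $t_a,t_b,p^{-1},t$, with denominator $\prod_{\varepsilon,\varepsilon'\in\{\pm1\}}(1-a^{\varepsilon}b^{\varepsilon'}t)$, itself a polynomial in $t_a,t_b,t$. Putting everything over the common denominator $\prod_{\eta\in\mathcal{D}_F}(1-\eta t)$ shows that $\mathcal{F}_p(s)\prod_{\eta\in\mathcal{D}_F}(1-\eta t)$ is a polynomial in $t$ with coefficients in $\Q[t_a,t_b,p^{-1}]$, of degree at most $7$ (the behaviour at $t=\infty$, where the leading terms of $\Sigma_{aa}+\Sigma_{bb}$ and of $2\Sigma_{ab}$ cancel).

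To identify this polynomial I would first note that $\Sigma_{aa}(-1)=\Sigma_{bb}(-1)=\Sigma_{ab}(-1)=1+p^{-1}$ — each a one-line residue evaluation — so that $\mathcal{F}_p$ vanishes at $t=-1$; this is the analogue of the $\GL_2$ identity $\sum_\nu u_\nu^2t^\nu=(1+t)\bigl((1-t)(1-a^2t)(1-a^{-2}t)\bigr)^{-1}$. Hence $(1+t)$ divides the numerator, which we write as $(1+t)\sum_{0\le i\le6}q_it^i$; expanding $\bigl(\Sigma_{aa}-2\Sigma_{ab}+\Sigma_{bb}\bigr)\prod_{\eta\in\mathcal{D}_F}(1-\eta t)\big/\bigl((t_a-t_b)^2(1+t)\bigr)$ and collecting powers of $t$ then produces $q_0,\dots,q_6$. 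The constant term forces $q_0=1$ (matching $\lambda_F(1)^2=1$), and comparing the coefficients of $t$ gives $q_0+q_1=\lambda_F(p)^2-\sum_{\eta\in\mathcal{D}_F}\eta=(t_a+t_b)^2-(t_a^2+t_b^2+t_at_b-3)$, whence $q_1=t_at_b+2$, as asserted.

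The main obstacle is precisely the bookkeeping in this last step: one must carry a degree-$7$ polynomial through the combination of $\Sigma_{aa},\Sigma_{ab},\Sigma_{bb}$, check that every contribution not symmetric under $a\leftrightarrow a^{-1}$, under $b\leftrightarrow b^{-1}$ and under $a\leftrightarrow b$ cancels — the identities $(x-b)(x-b^{-1})=x(t_a-t_b)$ for $x\in\{a,a^{-1}\}$, and the fact that $ab+a^{-1}b^{-1}$ and $ab^{-1}+a^{-1}b$ enter only through their sum $t_at_b$ and product $t_a^2+t_b^2-4$, are what force the result into $\Q[t_a,t_b,p^{-1}]$ — and then match the eight coefficients. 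This is entirely mechanical and well suited to a computer algebra system. Finally, since both sides of the asserted identity are rational functions of $a$ and $b$, it suffices to prove it on the dense open locus where the nine elements of $\mathcal{D}_F$ are pairwise distinct, the general case following by continuity.
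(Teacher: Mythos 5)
Your proposal is correct, and it reorganizes the computation rather than reproducing the paper's. The paper starts from the Pitale--Schmidt formula \eqref{deflambdapnu}, splits \(\lambda_F(p^\nu)^2\) as in \eqref{lambdaFpnu2} into the three pieces \(A(\nu)^2\), \(B(\nu)\), \(C(\nu)\), and computes their generating functions separately in Lemmas \ref{lem3.1}--\ref{lem3.4} by expanding everything into geometric terms, the \(q_i\) being read off after recombination. You instead obtain the closed form \(\lambda_F(p^\nu)=\bigl(w^{(a)}_\nu-w^{(b)}_\nu\bigr)/(t_a-t_b)\) directly from \eqref{lambdaFnaFn} and the factorization of the local spinor factor as \((1-t_az+z^2)(1-t_bz+z^2)\) --- this is the same data as Lemma \ref{lem3.1}, since \(D(a,b)=(t_a-t_b)^{-1}\) --- and then evaluate the three Hadamard products \(\Sigma_{aa},\Sigma_{ab},\Sigma_{bb}\) by residues. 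Your structural observations are sound and genuinely clarifying: the generating function \(\bigl(t_a-(1+p^{-1})z\bigr)/(1-t_az+z^2)\) is correct, the degree bound \(\leq 7\) does follow from the cancellation of the leading behaviour at \(t=\infty\) (because \(w^{(a)}_{-1}=w^{(b)}_{-1}=1+p^{-1}\)), the evaluations \(\Sigma_{aa}(-1)=\Sigma_{bb}(-1)=\Sigma_{ab}(-1)=1+p^{-1}\) check out and explain the factor \(1+p^{-s}\), and your coefficient comparison \(q_0+q_1=(t_a+t_b)^2-(t_a^2+t_b^2+t_at_b-3)\) correctly yields \(q_1=t_at_b+2\). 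What your route buys is a conceptual explanation of the shape of the answer (the denominator \(\prod_{\eta\in\mathcal{D}_F}(1-\eta p^{-s})\), the factor \(1+p^{-s}\), the prefactor \((t_a-t_b)^{-2}\)); what the paper's route buys is fully explicit partial-fraction data from which all seven \(q_i\) can be extracted directly. Be aware, though, that you verify only \(q_0\) and \(q_1\) and defer \(q_2,\dots,q_6\) to mechanical expansion; the paper does essentially the same (``factoring this expression\dots it is easy to check''), so this is not a gap relative to the published standard, but a fully self-contained proof would still require carrying out that expansion, e.g.\ by matching the coefficients of \(t^2,\dots,t^7\) exactly as you did for \(t^0\) and \(t^1\).
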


\begin{remark}\label{rem_residue}
Let \(p\) be a prime number such that \(1, a^2, a^{-2}, b^2, b^{-2}, ab, (ab)^{-1}, ab^{-1}, a^{-1}b\) are different. The residue at \(0\) of \(\mathcal{F}_{p}\) is %
\[%
\Res\left(\mathcal{F}_p(s),0\right)=\frac{C_{F,p}}{\log p} %
\] 
where %
\[%
C_{F,p}=2\prod_{\substack{\eta\in\mathcal{D}_F\\\eta\neq 1}}(1-\eta)^{-1}\sum_{0\leq i\leq 6}q_i.
\]
This can be simplified in %
\[%
C_{F,p}=\frac{2}{(t_a-t_b)^2}\bigg\{\bigg(\frac{1}{4-t_a^2}+\frac{1}{4-t_b^2}\bigg)\bigg(1-\frac{1}{p}\bigg)^2 + \frac{2}{p}\bigg\}. %
\]
This last expression is clearly positive. In particular \(\mathcal{F}_p\) has a simple pole at \(0\).
\end{remark}

\begin{remark}
The expressions of \(q_0,\dotsc,q_6\) are symmetric in \((t_a,t_b)\). We can then make the change of variable %
\begin{align*}%
u&=t_a+t_b=a+\frac{1}{a}+b+\frac{1}{b}\\
v&=t_at_b+2=\left(a+\frac{1}{a}\right)\left(b+\frac{1}{b}\right)+2.
\end{align*}
We obtain %
\begin{align*}
q_0& = 1,\\
q_1& = v,\\
q_2& = 2-u^2-2(u^2-v)p^{-1}+p^{-2},
\\
q_3& = v-2(u+v)(u-v)p^{-1}+vp^{-2},
\\
q_4& = 1-2(u^2-v)p^{-1}+(2-u^2)p^{-2},
\\
q_5& = vp^{-2},
\\
q_6& = p^{-2}. 
\end{align*}
\end{remark}

Proposition \ref{pro1} is an immediate consequence of \eqref{lambdaFpnu2} and Lemmas \ref{lem3.2}-\ref{lem3.4} below.

According to \cite[Proposition 3.35]{MR1349824}, the generating series of the sequence \(\left(\lambda_F(p^\nu)\right)_{\nu\geq0}\) is %
\[
\sum_{\nu\geq 0}\lambda_F(p^\nu)X^\nu=\frac{1-p^{-1}X^{2}}{(1-aX)(1-a^{-1}X)(1-bX)(1-b^{-1}X)}.
\]
From this identity, Pitale \& Schmidt proved \cite[Proposition 4.1]{PS2009} that
\begin{equation}\label{deflambdapnu}
\lambda_F(p^\nu)
= A(\nu) + (1-p^{-1})\sum_{1\leq j\leq\ent{\nu/2}} A(\nu-2j),
\end{equation}
where 
\begin{equation}\label{defAnu}
A(\nu) \coloneqq\sum_{0\leq i\leq \nu} a^{\nu-i} b^i \sum_{0\leq j\leq \nu} (ab)^{-j}.
\end{equation}

From \eqref{deflambdapnu}, we have
\begin{equation}\label{lambdaFpnu2}
\lambda_F(p^\nu)^2 = A(\nu)^2 + 2(1-p^{-1})B(\nu) + (1-p^{-1})^2C(\nu),
\end{equation}
where
\[
B(\nu) \coloneqq A(\nu) \sum_{j=1}^{\ent{\nu/2}} A(\nu-2j),
\qquad
C(\nu) \coloneqq\Big(\sum_{j=1}^{\ent{\nu/2}} A(\nu-2j)\Big)^2.
\] 

\begin{lemma}\label{lem3.1}
Let \(\nu\geq 1\) and let \(a\) and \(b\) be complex numbers of norm \(1\) such that 
\[
(a-b)(a^2-1)(b^2-1)(ab-1)\neq0.
\]
We have 
\begin{align}
A(\nu)
& = D(a, b) \big(a^{\nu+1}+a^{-\nu-1}-b^{\nu+1}-b^{-\nu-1}\big),
\label{lem3.1.A}
\\
\sum_{j=1}^{\ent{\nu/2}} A(\nu-2j)
& = D(a, b) \bigg(\frac{a^{\nu}-a^{-\nu}}{a-a^{-1}} -\frac{b^{\nu}-b^{-\nu}}{b-b^{-1}}\bigg),
\label{lem3.1.B}
\end{align}
where 
\begin{equation}\label{defCab}
D(a, b) \coloneqq\frac{ab}{(a-b)(ab-1)}.
\end{equation}
\end{lemma}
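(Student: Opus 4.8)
The plan is to establish \eqref{lem3.1.A} by directly summing the two geometric progressions in the definition \eqref{defAnu}, and then to derive \eqref{lem3.1.B} from \eqref{lem3.1.A} by a telescoping argument that painlessly absorbs the even/odd distinction hidden in \(\ent{\nu/2}\). The identities are purely algebraic in \(a,b\), so only the non-degeneracy hypothesis \((a-b)(a^2-1)(b^2-1)(ab-1)\neq0\) is used.

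First I would evaluate \(A(\nu)\). Because \(a\neq b\) and \(ab\neq1\) by hypothesis, the finite geometric sums give
\[
\sum_{0\leq i\leq\nu} a^{\nu-i}b^i=\frac{a^{\nu+1}-b^{\nu+1}}{a-b}
\qquad\text{and}\qquad
\sum_{0\leq j\leq\nu}(ab)^{-j}=\frac{(ab)^{\nu+1}-1}{(ab)^{\nu}(ab-1)},
\]
so that \(A(\nu)=\dfrac{(a^{\nu+1}-b^{\nu+1})\,((ab)^{\nu+1}-1)}{(a-b)(ab-1)(ab)^{\nu}}\). Expanding the numerator into four monomials and dividing by \((ab)^{\nu}\) rewrites the quotient as \(ab\,(a^{\nu+1}+a^{-\nu-1}-b^{\nu+1}-b^{-\nu-1})\) after simplification, and \eqref{lem3.1.A} follows. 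This computation is valid verbatim for \(\nu=0\) as well — it returns \(A(0)=1\) — and I will use \eqref{lem3.1.A} in this extended range below.

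Next I would treat the partial sum. Put \(S_a(\mu)\coloneqq(a^{\mu}-a^{-\mu})/(a-a^{-1})\) and define \(S_b\) analogously; these are well defined since \(a^2\neq1\) and \(b^2\neq1\), and one has \(S_a(0)=S_b(0)=0\) together with \(S_a(1)=S_b(1)=1\). The elementary identity \(a^{\mu+1}+a^{-\mu-1}=S_a(\mu+2)-S_a(\mu)\), immediate from \((a-a^{-1})(a^{\mu+1}+a^{-\mu-1})=a^{\mu+2}-a^{-\mu-2}-(a^{\mu}-a^{-\mu})\), combined with \eqref{lem3.1.A} in the range \(\mu\geq0\), gives \(A(\mu)=D(a,b)\,(T(\mu+2)-T(\mu))\) where \(T\coloneqq S_a-S_b\). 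Summing over \(\mu=\nu-2j\) for \(j=1,\dots,\ent{\nu/2}\) makes the sum telescope to \(D(a,b)\,(T(\nu)-T(\nu-2\ent{\nu/2}))\); since \(\nu-2\ent{\nu/2}\in\{0,1\}\) and \(T\) vanishes at \(0\) and at \(1\), the boundary term drops out and what remains is precisely \(D(a,b)\,T(\nu)\), i.e. the right-hand side of \eqref{lem3.1.B}.

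The only genuine work is the monomial bookkeeping in the first step: checking that \((a^{\nu+1}-b^{\nu+1})\,((ab)^{\nu+1}-1)/(ab)^{\nu}\) really collapses to \(ab\,(a^{\nu+1}+a^{-\nu-1}-b^{\nu+1}-b^{-\nu-1})\). Everything else is formal, and the telescoping in the second step is exactly what makes the parity of \(\nu\) irrelevant, since the surviving endpoint value is \(T(0)=0\) or \(T(1)=0\) in either case.
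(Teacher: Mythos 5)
Your proposal is correct and matches the paper's own argument: summing the two geometric progressions and simplifying gives \eqref{lem3.1.A}, and your telescoping via \(T=S_a-S_b\) is the same computation the paper does when it sums \(a^{\nu-2j+1}+a^{-\nu+2j-1}-b^{\nu-2j+1}-b^{-\nu+2j-1}\) and discards the boundary term at \(\nu-2\ent{\nu/2}\in\{0,1\}\), which vanishes identically. Your explicit check that \eqref{lem3.1.A} also holds at \(\nu=0\) (needed when \(\nu\) is even) is a small point the paper leaves implicit, but otherwise the two proofs coincide.
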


\begin{proof}
By summing the geometric series in~\eqref{defAnu}, we find that
\[
A(\nu) = \frac{a^{\nu+1}-b^{\nu+1}}{a-b} \cdot \frac{1-(ab)^{-\nu-1}}{1-(ab)^{-1}}.
\]
After simplification, we get \eqref{lem3.1.A}.

From this it is easy to see that, with the notation \(w\coloneqq\ent{\nu/2}\),
\begin{align*}
\frac{1}{D(a, b)}\sum_{j=1}^{w} A(\nu-2j)
& = \sum_{j=1}^{w} \big(a^{\nu-2j+1}+a^{-\nu+2j-1}-b^{\nu-2j+1}-b^{-\nu+2j-1}\big)
\\
& = \frac{a^{\nu}-a^{-\nu}}{a-a^{-1}} -\frac{b^{\nu}-b^{-\nu}}{b-b^{-1}}
- \frac{a^{\nu-2w}-a^{-\nu+2w}}{a-a^{-1}} + \frac{b^{\nu-2w}-b^{-\nu+2w}}{b-b^{-1}}.
\end{align*}
This implies \eqref{lem3.1.B} since
\[
\frac{a^{\nu-2w}-a^{-\nu+2w}}{a-a^{-1}} - \frac{b^{\nu-2w}-b^{-\nu+2w}}{b-b^{-1}}\equiv 0.
\]
\end{proof}

\begin{lemma}\label{lem3.2}
Let \(a\) and \(b\) be complex numbers of norm \(1\) such that 
\[
(a-b)(a^2-1)(b^2-1)(ab-1)\neq0.
\]
For \(\abs{t}<1\), we have 
\begin{align*}
\sum_{\nu=0}^{\infty} \frac{A(\nu)^2}{D(a, b)^2} t^{\nu}
& = \frac{4}{1-t} 
+ \frac{a^2}{1-a^2t} + \frac{a^{-2}}{1-a^{-2}t} 
+ \frac{b^2}{1-b^2t} + \frac{b^{-2}}{1-b^{-2}t}
\\
& \quad
- \frac{2ab}{1-abt} 
- \frac{2(ab)^{-1}}{1-(ab)^{-1}t}
- \frac{2ab^{-1}}{1-ab^{-1}t} 
- \frac{2a^{-1}b}{1-a^{-1}bt}.
\end{align*}
\end{lemma}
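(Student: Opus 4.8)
The plan is to start from the explicit formula for $A(\nu)$ given in Lemma \ref{lem3.1}, square it, and sum the resulting geometric series term by term. Writing $A(\nu)=D(a,b)\bigl(a^{\nu+1}+a^{-\nu-1}-b^{\nu+1}-b^{-\nu-1}\bigr)$, I would set $c_1 = a$, $c_2 = a^{-1}$, $c_3 = b$, $c_4 = b^{-1}$ with signs $\eps_1=\eps_2=+1$, $\eps_3=\eps_4=-1$, so that $A(\nu)/D(a,b) = \sum_{i=1}^{4}\eps_i c_i^{\nu+1}$. Then $A(\nu)^2/D(a,b)^2 = \sum_{i,j}\eps_i\eps_j c_i c_j (c_ic_j)^{\nu}$, and since $|c_ic_j|=1$ but $|t|<1$, each geometric series $\sum_{\nu\geq 0}(c_ic_j t)^{\nu} = (1-c_ic_jt)^{-1}$ converges absolutely. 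This immediately gives
\[
\sum_{\nu=0}^{\infty}\frac{A(\nu)^2}{D(a,b)^2}\,t^{\nu} = \sum_{1\leq i,j\leq 4}\frac{\eps_i\eps_j\, c_ic_j}{1-c_ic_j t}.
\]

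The remaining work is bookkeeping: enumerate the $16$ ordered pairs $(i,j)$ and collect terms with equal products $c_ic_j$. The four diagonal-type contributions with $c_ic_j=1$ arise from $(i,j)\in\{(1,2),(2,1),(3,4),(4,3)\}$, each carrying sign $\eps_i\eps_j = +1$ for the $a$-pair and $+1$ for the $b$-pair (since $(-1)(-1)=1$), contributing $4/(1-t)$. The "square" terms $c_ic_j = a^{\pm 2}, b^{\pm 2}$ come from $(1,1),(2,2),(3,3),(4,4)$, each with $\eps_i\eps_j=+1$, giving $a^2/(1-a^2t)+a^{-2}/(1-a^{-2}t)+b^2/(1-b^2t)+b^{-2}/(1-b^{-2}t)$. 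The cross terms $c_ic_j \in \{ab, (ab)^{-1}, ab^{-1}, a^{-1}b\}$ each occur twice (from $(i,j)$ and $(j,i)$) and carry sign $\eps_i\eps_j = (+1)(-1) = -1$, producing the four terms $-2ab/(1-abt)$, $-2(ab)^{-1}/(1-(ab)^{-1}t)$, $-2ab^{-1}/(1-ab^{-1}t)$, $-2a^{-1}b/(1-a^{-1}bt)$. Assembling these matches the claimed expression exactly.

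I do not anticipate a genuine obstacle here: the only subtlety is making sure the sign pattern $\eps_i\eps_j$ is tracked correctly across the three groups of pairs, and that the hypothesis $(a-b)(a^2-1)(b^2-1)(ab-1)\neq 0$ is what guarantees Lemma \ref{lem3.1} applies (so that the closed form for $A(\nu)$ is valid for every $\nu\geq 1$; the $\nu=0$ term, where $A(0)=1$ trivially, is consistent with the closed form as well, since $\sum_i\eps_i c_i^{1}\cdot$ evaluated appropriately, or more safely one checks $A(0)=1$ directly and notes the series identity is an identity of rational functions once both sides are expanded). The absolute convergence for $|t|<1$ justifies the interchange of summation order, and the final identity is then purely formal.
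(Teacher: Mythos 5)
Your proposal is correct and coincides with the paper's own proof: both use \eqref{lem3.1.A} to write \(A(\nu)/D(a,b)\) as a signed sum of the four exponentials \(a^{\nu+1},a^{-\nu-1},b^{\nu+1},b^{-\nu-1}\), expand the square into the nine distinct geometric progressions, and sum each one for \(\abs{t}<1\); your index bookkeeping over the \(16\) ordered pairs is just an organized version of the paper's direct expansion. Your side remark about \(\nu=0\) is also fine, since the closed form \(D(a,b)(a+a^{-1}-b-b^{-1})=1=A(0)\) holds there as well.
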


\begin{proof}
With the help of \eqref{lem3.1.A} of Lemma \ref{lem3.1}, we have
\begin{align*}
D(a, b)^{-2}A(\nu)^2
& = \big(a^{\nu+1}+a^{-\nu-1}-b^{\nu+1}-b^{-\nu-1}\big)^2
\\
& = 4 + a^{2(\nu+1)} + a^{-2(\nu+1)} + b^{2(\nu+1)} + b^{-2(\nu+1)}
\\
& \quad
- 2(ab)^{\nu+1} - 2(ab)^{-\nu-1} - 2(ab^{-1})^{\nu+1} - 2(a^{-1}b)^{\nu+1}.
\end{align*}
This implies the required formula.
\end{proof}

\begin{lemma}\label{lem3.3}
Let \(a\) and \(b\) be complex numbers of norm \(1\) such that 
\[
(a-b)(a^2-1)(b^2-1)(ab-1)\neq0.
\]
For \(\abs{t}<1\), we have 
\begin{align*}
\sum_{\nu=0}^{\infty} \frac{B(\nu)}{D(a, b)^2}  t^{\nu}
& = - \frac{2}{1-t}
\\
& \quad
+ \frac{a^2}{a^2-1} \cdot \frac{1}{1-a^2t}
- \frac{1}{a^2-1} \cdot \frac{1}{1-a^{-2}t}
\\
& \quad
+ \frac{b^2}{b^2-1} \cdot \frac{1}{1-b^2t}
- \frac{1}{b^2-1} \cdot \frac{1}{1-b^{-2}t}
\\
& \quad
- \frac{a^3b+ab^3-2ab}{(a^2-1)(b^2-1)} \cdot \frac{1}{1-abt}
- \frac{ab^{-1}+a^{-1}b-2ab}{(a^2-1)(b^2-1)} \cdot \frac{1}{1-(ab)^{-1}t}
\\
& \quad
+ \frac{a^3b+ab^{-1}-2ab}{(a^2-1)(b^2-1)} \cdot \frac{1}{1-ab^{-1}t}
+ \frac{ab^3+a^{-1}b-2ab}{(a^2-1)(b^2-1)} \cdot \frac{1}{1-a^{-1}bt}.
\end{align*}
\end{lemma}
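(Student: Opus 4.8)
The plan is to substitute the two closed forms of Lemma~\ref{lem3.1} into \(B(\nu)=A(\nu)\sum_{j=1}^{\ent{\nu/2}}A(\nu-2j)\) and expand the product into geometric series in \(\nu\). Although Lemma~\ref{lem3.1} is stated for \(\nu\geq1\), both identities are valid at \(\nu=0\) as well: using \(a+a^{-1}-b-b^{-1}=(a-b)(ab-1)/(ab)\) one checks \(D(a,b)(a+a^{-1}-b-b^{-1})=1=A(0)\), and both sides of \eqref{lem3.1.B} vanish at \(\nu=0\). Thus, writing
\[
P(\nu)\coloneqq a^{\nu+1}+a^{-\nu-1}-b^{\nu+1}-b^{-\nu-1},
\qquad
Q(\nu)\coloneqq\frac{a^{\nu}-a^{-\nu}}{a-a^{-1}}-\frac{b^{\nu}-b^{-\nu}}{b-b^{-1}},
\]
we have \(B(\nu)/D(a,b)^2=P(\nu)Q(\nu)\) for all \(\nu\geq0\). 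The standing hypothesis \((a-b)(a^2-1)(b^2-1)(ab-1)\neq0\) is precisely what makes \(D(a,b)\), the factors \((a-a^{-1})^{-1}\), \((b-b^{-1})^{-1}\), and the denominators \(a^2-1\), \(b^2-1\) appearing below all well-defined.

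Next I would multiply out \(P(\nu)Q(\nu)\), getting sixteen monomials, each of the form \(c_{\eta}\,\eta^{\nu}\) with \(\eta\in\mathcal{D}_F\) (the set \eqref{defDF}); the value \(\eta=1\) occurs exactly for the four ``diagonal'' products, e.g.\ \(a^{\nu+1}\cdot\bigl(-a^{-\nu}/(a-a^{-1})\bigr)\). Since \(\abs{a}=\abs{b}=1\), for \(\abs{t}<1\) we have \(\sum_{\nu\geq0}\eta^{\nu}t^{\nu}=(1-\eta t)^{-1}\) for every \(\eta\in\mathcal{D}_F\); as the left-hand series converges absolutely on \(\abs{t}<1\) (because \(B(\nu)\) grows only polynomially in \(\nu\)), summing term by term yields
\[
\sum_{\nu\geq0}\frac{B(\nu)}{D(a,b)^2}\,t^{\nu}=\sum_{\eta\in\mathcal{D}_F}\frac{c_{\eta}}{1-\eta t},
\]
so it remains to evaluate the nine coefficients \(c_{\eta}\).

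Collecting terms: for \(\eta=1\) the four diagonal contributions sum to \(c_1=-(a-a^{-1})/(a-a^{-1})-(b-b^{-1})/(b-b^{-1})=-2\), which gives the term \(-2/(1-t)\). For \(\eta=a^2\) a single monomial contributes and \(c_{a^2}=a/(a-a^{-1})=a^2/(a^2-1)\); the cases \(\eta=a^{-2},b^2,b^{-2}\) are identical in form. For each of the four ``cross'' values \(ab,(ab)^{-1},ab^{-1},a^{-1}b\), exactly two of the sixteen monomials contribute, one having its \(a\)-factor from \(P\) and its \(b\)-factor from \(Q\), the other with these interchanged; adding the two and using \(a-a^{-1}=(a^2-1)/a\), \(b-b^{-1}=(b^2-1)/b\) to clear denominators produces the common denominator \((a^2-1)(b^2-1)\) and, after a short simplification, exactly the coefficient in the statement. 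For instance, for \(\eta=ab\) the two monomials sum to \(-\frac{a}{b-b^{-1}}-\frac{b}{a-a^{-1}}=-\frac{ab\bigl((a^2-1)+(b^2-1)\bigr)}{(a^2-1)(b^2-1)}=-\frac{a^3b+ab^3-2ab}{(a^2-1)(b^2-1)}\).

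The argument involves no conceptual obstacle: it is a finite expansion followed by bookkeeping. The only place needing care is this last step — correctly sorting the sixteen monomials into their nine exponents \(\eta\) (in particular deciding which pairs feed the four cross values) and then performing the algebraic reduction into the stated shape with denominator \((a^2-1)(b^2-1)\). I would verify each of those four coefficients separately against the displayed formula.
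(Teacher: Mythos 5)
Your proposal is correct and follows essentially the same route as the paper's proof: substitute the two formulas of Lemma~\ref{lem3.1} into \(B(\nu)\), expand the resulting product of four-term expressions into monomials \(c_\eta\,\eta^{\nu}\), and sum the geometric series in \(t\) (I checked your collected coefficients for \(\eta=1,a^{\pm2},b^{\pm2},ab,(ab)^{-1},ab^{-1},a^{-1}b\) and they all agree with the displayed formula). Your explicit verification that the formulas of Lemma~\ref{lem3.1} remain valid at \(\nu=0\) is a small point the paper passes over silently, and is a welcome addition.
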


\begin{proof}
With the help of \eqref{lem3.1.A} and \eqref{lem3.1.B} of Lemma \ref{lem3.1}, we have
\begin{align*}
\frac{B(\nu)}{D(a, b)^2}
& = (a^{\nu+1}+a^{-\nu-1}-b^{\nu+1}-b^{-\nu-1})\bigg(\frac{a^{\nu}-a^{-\nu}}{a-a^{-1}} -\frac{b^{\nu}-b^{-\nu}}{b-b^{-1}}\bigg).
\\
& = \frac{a^{2\nu+1}-a^{-2\nu-1}-a^{\nu}b^{\nu+1}+a^{-\nu}b^{-\nu-1}-a^{\nu}b^{-\nu-1}+a^{-\nu}b^{\nu+1}}{a-a^{-1}} 
\\
& \quad
+ \frac{b^{2\nu+1}-b^{-2\nu-1} - a^{\nu+1}b^{\nu}+a^{-\nu-1}b^{-\nu} - a^{-\nu-1}b^{\nu}+a^{\nu+1}b^{-\nu}}{b-b^{-1}}
-2
\end{align*}
From this we deduce for \(\abs{t}<1\), 
\begin{align*}
& \sum_{\nu=0}^{\infty} \frac{B(\nu)}{D(a, b)^2} t^{\nu}
\\
& = \frac{a}{a^2-1} 
\bigg(\frac{a}{1-a^2t} - \frac{a^{-1}}{1-a^{-2}t} 
- \frac{b}{1-abt} + \frac{b^{-1}}{1-(ab)^{-1}t}
- \frac{b^{-1}}{1-ab^{-1}t}
+ \frac{b}{1-a^{-1}bt}\bigg)
\\
& + \frac{b}{b^2-1} 
\bigg(
\frac{b}{1-b^2t} - \frac{b^{-1}}{1-b^{-2}t}
- \frac{a}{1-abt} + \frac{a^{-1}}{1-(ab)^{-1}t} 
- \frac{a^{-1}}{1-a^{-1}bt} + \frac{a}{1-ab^{-1}t} 
\bigg)
\\
& - \frac{2}{1-t}\cdot
\end{align*}
This implies the required formula.
\end{proof}

\begin{lemma}\label{lem3.4}
Let \(a\) and \(b\) be complex numbers of norm \(1\) such that 
\[
(a-b)(a^2-1)(b^2-1)(ab-1)\neq0.
\]
For \(\abs{t}<1\), we have 
\begin{align*}
\sum_{\nu=0}^{\infty} \frac{C(\nu)}{D(a, b)^2}
&  t^{\nu}
= - \bigg(\frac{2a^2}{(a^2-1)^2} + \frac{2b^2}{(b^2-1)^2}\bigg) \frac{1}{1-t}
\\
& 
+  \frac{a^2}{(a^2-1)^2}\bigg(\frac{1}{1-a^2t} + \frac{1}{1-a^{-2}t}\bigg)
+ \frac{b^2}{(b^2-1)^2}\bigg(\frac{1}{1-b^2t} + \frac{1}{1-b^{-2}t}\bigg)
\\
& 
- \frac{2ab}{(a^2-1)(b^2-1)}
\bigg(\frac{1}{1-abt} + \frac{1}{1-(ab)^{-1}t} - \frac{1}{1-ab^{-1}t} - \frac{1}{1-a^{-1}bt}\bigg).
\end{align*}
\end{lemma}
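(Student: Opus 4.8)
The plan is to follow the pattern of the proofs of Lemmas~\ref{lem3.2} and~\ref{lem3.3}: insert the closed form \eqref{lem3.1.B} into the definition of \(C(\nu)\), expand the resulting square, and sum the geometric series termwise.

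First I would note that \eqref{lem3.1.B} of Lemma~\ref{lem3.1} also holds for \(\nu=0\), both sides being zero. Since \(C(\nu)=\big(\sum_{j=1}^{\ent{\nu/2}}A(\nu-2j)\big)^{2}\), this gives, for every \(\nu\geq0\),
\begin{equation*}
\frac{C(\nu)}{D(a,b)^2}
= \Big(\frac{a^{\nu}-a^{-\nu}}{a-a^{-1}} - \frac{b^{\nu}-b^{-\nu}}{b-b^{-1}}\Big)^{2}.
\end{equation*}
Expanding the square produces three groups of terms: \((a^{\nu}-a^{-\nu})^2/(a-a^{-1})^2\), the analogue in \(b\), and the cross term \(-2(a^{\nu}-a^{-\nu})(b^{\nu}-b^{-\nu})/[(a-a^{-1})(b-b^{-1})]\).

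Next I would expand the numerators: \((a^{\nu}-a^{-\nu})^2=a^{2\nu}-2+a^{-2\nu}\) and \((a^{\nu}-a^{-\nu})(b^{\nu}-b^{-\nu})=(ab)^{\nu}-(ab^{-1})^{\nu}-(a^{-1}b)^{\nu}+(ab)^{-\nu}\). Every monomial \(c^{\nu}\) appearing here has \(\abs{c}=1\), so for \(\abs{t}<1\) one has \(\sum_{\nu\geq0}c^{\nu}t^{\nu}=(1-ct)^{-1}\); summing termwise yields the nine partial fractions in the statement, where the \((1-t)^{-1}\) term collects the contributions \(-2/(a-a^{-1})^2\) and \(-2/(b-b^{-1})^2\) coming from the two squared factors.

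Finally I would rewrite the prefactors with the identities \(a^2-1=a(a-a^{-1})\) and \(b^2-1=b(b-b^{-1})\), so that \(1/(a-a^{-1})^2=a^2/(a^2-1)^2\), \(1/(b-b^{-1})^2=b^2/(b^2-1)^2\), and \(1/[(a-a^{-1})(b-b^{-1})]=ab/[(a^2-1)(b^2-1)]\); the hypothesis \((a-b)(a^2-1)(b^2-1)(ab-1)\neq0\) guarantees these denominators do not vanish. This gives the claimed identity. The argument is entirely routine; the only thing to watch is the bookkeeping of the signs of the four cross monomials \((ab)^{\nu}\), \(-(ab^{-1})^{\nu}\), \(-(a^{-1}b)^{\nu}\), \((ab)^{-\nu}\), so that they land with the correct signs inside the last parenthesis of the statement.
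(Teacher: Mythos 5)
Your proposal is correct and follows essentially the same route as the paper: insert \eqref{lem3.1.B} into \(C(\nu)\), expand the square into the nine monomials \(1, a^{\pm2\nu}, b^{\pm2\nu}, (ab)^{\pm\nu}, (ab^{-1})^{\nu}, (a^{-1}b)^{\nu}\), sum the geometric series termwise for \(\abs{t}<1\), and rewrite \(1/(a-a^{-1})^2=a^2/(a^2-1)^2\) and \(1/[(a-a^{-1})(b-b^{-1})]=ab/[(a^2-1)(b^2-1)]\). The signs of the cross terms and the collection of the \((1-t)^{-1}\) contributions match the paper's computation exactly.
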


\begin{proof}
With the help of \eqref{lem3.1.B} of Lemma \ref{lem3.1}, we have
\begin{align*}
\frac{C(\nu)}{D(a, b)^2}
& = \bigg(\frac{a^{\nu}-a^{-\nu}}{a-a^{-1}}\bigg)^2
+ \bigg(\frac{b^{\nu}-b^{-\nu}}{b-b^{-1}}\bigg)^2
- 2\frac{(a^{\nu}-a^{-\nu})(b^{\nu}-b^{-\nu})}{(a-a^{-1})(b-b^{-1})}
\\
& = \frac{a^{2\nu}-2+a^{-2\nu}}{(a-a^{-1})^2}
+ \frac{b^{2\nu}-2+b^{-2\nu}}{(b-b^{-1})^2}
- 2\frac{(ab)^{\nu}+(ab)^{-\nu}-(ab^{-1})^{\nu}-(a^{-1}b)^{\nu}}{(a-a^{-1})(b-b^{-1})}. 
\end{align*}
From this we deduce for \(\abs{t}<1\),  
\begin{align*}
& \sum_{\nu=0}^{\infty} \frac{C(\nu)}{D(a, b)^2} t^{\nu}
\\
& = \frac{a^2}{(a^2-1)^2}\bigg(\frac{1}{1-a^2t} + \frac{1}{1-a^{-2}t} - \frac{2}{1-t}\bigg)
\\
& \quad
+ \frac{b^2}{(b^2-1)^2}\bigg(\frac{1}{1-b^2t} + \frac{1}{1-b^{-2}t} - \frac{2}{1-t}\bigg)
\\
& \quad
- \frac{2ab}{(a^2-1)(b^2-1)}
\bigg(\frac{1}{1-abt} + \frac{1}{1-(ab)^{-1}t} - \frac{1}{1-ab^{-1}t} - \frac{1}{1-a^{-1}bt}\bigg).
\end{align*}
This implies the required formula.
\end{proof}

An expanded expression for %
\[%
\sum_{\nu\geq 0}\lambda_F(p^\nu)^2t^\nu %
\]
is deduced from~\eqref{lambdaFpnu2} and Lemmas \ref{lem3.2}-\ref{lem3.4}. Factoring this expression, we obtain %
\[%
\sum_{\nu\geq 0}\lambda_F(p^\nu)^2t^\nu=\frac{1+t}{\prod_{\eta\in\mathcal{D}_F}(1-\eta t)}\sum_{i=0}^6q_it^i %
\]
where the \(t_i\) are given in terms of \(a\) and \(b\). It is easy to check that these values are expressed in terms of \(t_a\) and \(t_b\) as in Proposition~\ref{pro1}.

Similar to Proposition \ref{pro1}, we can prove the following result.

\begin{proposition}\label{pro2}
For \(\re s>0\), we have
\[
\sum_{\nu\geq 0} \frac{a_F(p^{\nu})^2}{p^{\nu s}}
= \prod_{0\leq j\leq 8} (1-\eta_jp^{-s})^{-1} \sum_{0\leq j\leq 4} r_j p^{-js}
\]
where
\[
r_0 = r_4=1,
\qquad
r_1 =r_3= t_at_b+2,
\qquad
r_2 = -(t_a+t_b)^2+2.
\]
\end{proposition}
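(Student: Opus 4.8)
The plan is to imitate the proof of Proposition~\ref{pro1}, the point being that $a_F(p^\nu)$ obeys exactly the analogue of \eqref{deflambdapnu} with the factor $1-p^{-1}$ deleted. First I would record that \eqref{defZFpt} and \eqref{defab} give
\[
\sum_{\nu\geq0}a_F(p^\nu)X^\nu=\frac1{(1-aX)(1-a^{-1}X)(1-bX)(1-b^{-1}X)},
\]
so that, comparing with the generating series of $(\lambda_F(p^\nu))_{\nu}$ quoted from \cite[Proposition 3.35]{MR1349824}, one has $\lambda_F(p^\nu)=a_F(p^\nu)-p^{-1}a_F(p^{\nu-2})$ for all $\nu\geq0$ (with $a_F(p^{-1})=a_F(p^{-2})=0$). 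This recursion together with $a_F(p^0)=1$ and $a_F(p)=A(1)$ determines $(a_F(p^\nu))_\nu$ uniquely, and by \eqref{deflambdapnu} the sequence $\nu\mapsto A(\nu)+\sum_{1\leq j\leq\ent{\nu/2}}A(\nu-2j)$ satisfies the same recursion and initial values; hence
\[
a_F(p^\nu)=A(\nu)+\sum_{1\leq j\leq\ent{\nu/2}}A(\nu-2j).
\]
Squaring and using the quantities $B(\nu)$ and $C(\nu)$ introduced just after \eqref{lambdaFpnu2}, this yields the clean identity
\[
a_F(p^\nu)^2=A(\nu)^2+2B(\nu)+C(\nu),
\]
which is precisely \eqref{lambdaFpnu2} with every occurrence of $1-p^{-1}$ replaced by $1$.

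The rest is formal. Multiplying by $t^\nu$, summing over $\nu\geq0$, and feeding in Lemmas~\ref{lem3.2}, \ref{lem3.3} and \ref{lem3.4} (multiplied through by $D(a,b)^2$ and weighted $1$, $2$, $1$ respectively), one obtains a closed rational expression for $\sum_{\nu\geq0}a_F(p^\nu)^2t^\nu$ whose poles all lie among $t^{-1}\in\mathcal D_F$. Bringing it over the common denominator $\prod_{\eta\in\mathcal D_F}(1-\eta t)$ and simplifying the numerator is exactly the manipulation carried out at the end of Section~\ref{DS} for $\sum_\nu\lambda_F(p^\nu)^2t^\nu$, now with $1-p^{-1}$ set equal to $1$ throughout: the monomials carrying a positive power of $p^{-1}$ disappear and what survives assembles into the stated numerator, so that the answer is obtained from $\mathcal F_p$ of Proposition~\ref{pro1} by the formal substitution $p^{-1}\mapsto0$. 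Finally one replaces $t$ by $p^{-s}$; absolute convergence for $\re s>0$ is clear from \eqref{RamanujanConjecture}, exactly as for \eqref{defFps}.

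The only step requiring computation is the numerator simplification, and it presents no new difficulty relative to Proposition~\ref{pro1}: it is a finite identity of rational functions in $a$ and $b$, checked most economically by specialising the (already available) identity of Proposition~\ref{pro1} at $p^{-1}=0$, or else directly by clearing denominators and comparing the coefficients of the two power series term by term. A convenient internal check is that $\mathcal D_F$ is stable under $\eta\mapsto\eta^{-1}$ with $\prod_{\eta\in\mathcal D_F}\eta=1$, so that $\prod_{\eta\in\mathcal D_F}(1-\eta t)$ is anti-palindromic; combined with the manifest invariance of $a_F(p^\nu)^2$ under $a\mapsto a^{-1}$, $b\mapsto b^{-1}$, this forces the numerator to be palindromic, in agreement with $r_0=r_4$ and $r_1=r_3$.
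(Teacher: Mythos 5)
Your route is the one the paper intends (the paper itself only says the proof is ``similar to Proposition \ref{pro1}''), and the preparatory steps are sound: \eqref{lambdaFnaFn} gives \(\lambda_F(p^\nu)=a_F(p^\nu)-p^{-1}a_F(p^{\nu-2})\), your recursion-plus-initial-values argument correctly yields \(a_F(p^\nu)=\sum_{0\leq j\leq\ent{\nu/2}}A(\nu-2j)\), hence \(a_F(p^\nu)^2=A(\nu)^2+2B(\nu)+C(\nu)\), and feeding Lemmas \ref{lem3.2}--\ref{lem3.4} in with weights \(1,2,1\) does show that the generating series of \(a_F(p^\nu)^2\) is the specialization at \(p^{-1}=0\) of the one computed for \(\lambda_F(p^\nu)^2\).

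The gap is in the final identification. The specialization of Proposition \ref{pro1} at \(p^{-1}=0\) retains the prefactor \(1+p^{-s}\), namely it gives
\[
\sum_{\nu\geq 0}\frac{a_F(p^\nu)^2}{p^{\nu s}}
=(1+p^{-s})\prod_{\eta\in\mathcal{D}_F}(1-\eta p^{-s})^{-1}
\bigl(1+(t_at_b+2)p^{-s}+(2-(t_a+t_b)^2)p^{-2s}+(t_at_b+2)p^{-3s}+p^{-4s}\bigr),
\]
and this is \emph{not} the displayed right-hand side of Proposition \ref{pro2}: the factor \(1+p^{-s}\) cannot be absorbed into the degree-four polynomial \(\sum_{0\leq j\leq 4}r_jp^{-js}\), which in general does not vanish at \(p^{-s}=-1\). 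So your claim that ``what survives assembles into the stated numerator'' is inconsistent with your own (correct) substitution argument. In fact the statement as printed fails a direct coefficient check at \(p^{-s}\): the left-hand side gives \(a_F(p)^2=(t_a+t_b)^2\), whereas the printed right-hand side gives \((t_at_b+2)+(t_a^2+t_b^2+t_at_b-3)=(t_a+t_b)^2-1\). Your computation therefore proves the identity \emph{with} the extra factor \(1+p^{-s}\) (the analogue of the factor \(1+p^{-s}\) present in Proposition \ref{pro1}); as a proof of the proposition exactly as stated, the last step does not go through, and you should either carry the factor \(1+p^{-s}\) in the conclusion or point out explicitly that the printed formula is off by this factor rather than assert agreement with it. (A minor point: for absolute convergence in \(\re s>0\) you want the bound \eqref{RP-Conjecture} for \(a_F\), not \eqref{RamanujanConjecture}.)
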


\section{Proof of Theorem \ref{thm2}}

Let \(\mathcal{D}_F\) and \(\eta_j\) be defined as at the beginning of Section \ref{DS}. According to \eqref{RPConjecture}, we have \(\abs{\eta_j}=1\). Thus we can write \(\eta_j = \ee^{\ic\theta_j}\) with \(\theta_j\in (-\pi, \pi]\). We choose a \(\theta_{*}\in (-\pi, \pi]\) such that \(\sin(\pm\theta_j-\theta_{*})\neq0\) for \(0\leq j\leq 8\). Let \(\eps\) be an arbitrarily small positive number and take
\[
T = \frac{2\pi\ent{\log x}+\theta_{*}}{\log p}. %
\]

In view of Proposition \ref{pro1}, we can apply the Perron formula (\cite[Theorem II.2.5]{Te95}) to write
\begin{equation}\label{Perron}
\sum_{p^\nu\leq x} \lambda_F(p^\nu)^2 \log\bigg(\frac{x}{p^{\nu}}\bigg)
= \frac{1}{2\pi\ic} \int_{(\log x)^{-1}-\ic\infty}^{(\log x)^{-1}+\ic\infty} \mathcal{F}_{p}(s) \frac{x^s}{s^2} \dd s.
\end{equation}

We truncate the integral at \(T\). Since \(\abs{1-\eta_j p^{-s}}\geq 1-p^{-(\log x)^{-1}}\gg_p (\log x)^{-1}\) for \(s=(\log x)^{-1}+\ic\tau\) with \(\abs{\tau}\geq T\) and \(0\leq j\leq 8\), we have \(\mathcal{F}_p(s)\ll_{F,p} \log x\) for these values of \(s\). Thus 
\begin{equation}\label{infty}
\frac{1}{2\pi\ic} \int_{\substack{s=(\log x)^{-1}+\ic\tau\\ \abs{\tau}\geq T}} \mathcal{F}_{p}(s) \frac{x^s}{s^2} \dd s\ll_{F, p} \frac{\log x}{T}.
\end{equation}

Now we shift the segment of integration \([(\log x)^{-1}-\ic T, (\log x)^{-1}+\ic T]\) to \([-\eps-\ic T, -\eps+\ic T]\).
The poles of \(\mathcal{F}(s) \frac{x^s}{s^2}\) in the rectangle \(-\eps\leq \re s\leq (\log x)^{-1}\) and \(\abs{\tau}\leq T\) are as follows :
\[
s_{j, \ell} \coloneqq\frac{\theta_j+2\pi\ell}{\log p}\ic
\quad
(0\leq j\leq 8)
\]
where \(\ell\in {\Z}\) such that \(\abs{s_{j, \ell}}<T\). Clearly the number of such poles is bounded, up to multiplicative constant, by \(T\).
 
With the help of the Cauchy theorem, we can write 
\begin{multline}\label{Integral}
\frac1{2\pi \ic} \int_{(\log x)^{-1}-\ic T}^{(\log x)^{-1}+\ic T} \mathcal{F}_{p}(s) \frac{x^s}{s^2} \dd s
\\
 = \sum_{j=0}^{8} \sum_{\substack{\ell\in {\mathbb Z}\\ \abs{s_{j, \ell}}<T}} \text{Res}\bigg(\mathcal{F}_{p}(s)\frac{x^s}{s^2}, s_{j,\ell}\bigg) 
+ \frac1{2\pi \ic} \int_{\mathscr{L}_{\rm h}\cup\mathscr{L}_{\rm v}} \mathcal{F}_{p}(s) \frac{x^s}{s^2} \dd s,
\end{multline}
where \(\mathscr{L}_{\rm h} \coloneqq[-\eps\pm\ic T, (\log x)^{-1}\pm\ic T]\) and \(\mathscr{L}_{\rm v} \coloneqq[-\eps-\ic T, -\eps+\ic T]\).

Under our assumption on \((a, a^{-1}, b, b^{-1})\), all poles of \(\mathcal{F}_{p}(s)\frac{x^s}{s^2}\) in the rectangle \(-\eps\leq \re s\leq (\log x)^{-1}\) and \(\abs{\tau}\leq T\) are simple except for \(s=0\) which is of order \(3\). The residue of \(\mathcal{F}_p\) at \(0\) is given in Remark~\ref{rem_residue}: %
\[%
\Res\left(\mathcal{F}_p(s),0\right)=\frac{C_{F,p}}{\log p} %
\] 
where %
\[%
C_{F,p}=\frac{2}{(t_a-t_b)^2}\bigg\{\bigg(\frac{1}{4-t_a^2}+\frac{1}{4-t_b^2}\bigg)\bigg(1-\frac{1}{p}\bigg)^2 + \frac{2}{p}\bigg\}. %
\]
It follows that
\begin{equation}\label{residu}
\begin{cases}
\Res\big(\mathcal{F}_{p}(s)\frac{x^s}{s^2}, 0\big) = \frac{C_{F,p}}{\log p} (\log x)^2 + O_{F,p}(\log x),
\\\noalign{\vskip 1mm}
\Res\big(\mathcal{F}_{p}(s)\frac{x^s}{s^2}, s_{j,\ell}\big) \ll_{F, p} 1
\quad
\text{(\(j=0\), \(\ell\neq0\) or \(1\leq j\leq 8\), \(\ell\in\Z\)).}
\end{cases}
\end{equation}

Next we handle the integral over the vertical segment \(\mathscr{L}_{\rm v}\). We have \(\abs{1-\eta_j p^{-s}}\geq p^{\eps}-1\) for \(s\in \mathscr{L}_{\rm v}\) and \(0\leq j\leq 8\). This implies that \(\mathcal{F}_p(s)\ll_{F,p,\eps} 1\) for \(s\in \mathscr{L}_{\rm v}\). Thus 
\begin{equation}\label{vertical}
\frac1{2\pi \ic} \int_{\mathscr{L}_{\rm v}} \mathcal{F}_{p}(s) \frac{x^s}{s^2}\dd s\ll_{F, p,\eps} 1.
\end{equation}

Finally we estimate the contribution of the integral over the horizontal segments \(\mathscr{L}_{\rm h}\). For \(s\in \mathscr{L}_{\rm h}\) and \(0\leq j\leq 8\), we have
\[
\abs{1-\eta_jp^{-s}}
= \abs*{1-p^{-\sigma}\ee^{\ic(\theta_j\mp\theta_{*})}}
\geq p^{-(\log x)^{-1}}\abs{\sin(\theta_j\mp\theta_{*})}
\gg_{F, p} 1.
\]
This implies that \(\mathcal{F}_p(s)\ll_{F,p} 1\) for \(s\in \mathscr{L}_{\rm h}\). Thus 
\begin{equation}\label{horizontal}
\frac1{2\pi \ic} \int_{\mathscr{L}_{\rm h}} \mathcal{F}_{p}(s) \frac{x^s}{s^2}\dd s\ll_{F, p} \frac{1}{T^2}.
\end{equation}

By combining \eqref{infty}, \eqref{Integral}, \eqref{residu}, \eqref{vertical}, \eqref{horizontal} with \eqref{Perron}, we obtain
\[
\sum_{p^\nu\leq x} \lambda_F(p^\nu)^2 \log\bigg(\frac{x}{p^{\nu}}\bigg)= \frac{C_{F,p}}{\log p} (\log x)^2 + O_{F,p}(\log x + T),
\]
which implies the desired asymptotic formula \eqref{SecondMoment} since \(T\asymp \log x\).

The estimate \eqref{FirstMoment} can be proved similarly. %

\section{Proof of Theorem \ref{thm3}}

The notion of \(\mathscr{B}\)-free numbers, as a generalization of square free numbers, was introduced by Erd\H os \cite{Er66}. For a set of integers
\[
\mathscr{B}=\{b_i\colon 1<b_1<b_2<\dots\,\}
\]
such that
\begin{equation}\label{Bhypothese}
\sum_{i\geq 1} 1/b_i<\infty
\qquad\text{and}\qquad
(b_i,b_j)=1
\quad
(i\neq j),
\end{equation}
one says that \(n\geq 1\) is \(\mathscr{B}\)-free if it is not divisible by any element in \(\mathscr{B}\). Many authors studied the distribution of \(\mathscr{B}\)-free integers. A detailed historical description can be found in \cite{KowalskiRobertWu2007, WuZhai2013}. In particular the authors of these two papers proved the following results (see \cite[Corollary 10]{KowalskiRobertWu2007} and \cite[Proposition 2]{WuZhai2013}, respectively):
\begin{itemize}
\item For all \(\eps>0\), \(x\geq x_0(\eps)\) and \(y\geq x^{7/17+\eps}\), we have
\begin{equation}\label{krw}
\#\{x<n\leq x+y\colon \text{\(n\) is \(\mathscr{B}\)-free}\}\gg_{\mathscr{B},\eps} y.
\end{equation}
\item For all \(\eps>0\), \(x\geq x_0(\eps)\), \(y\geq x^{17/38+100\eps}\), \(1\leq a\leq q\leq x^\eps\) with \(\left((a, q), b\right)=1\) for all \(b\in \mathscr{B}\), we have
\begin{equation}\label{wz}
\#\{x<n\leq x+y\colon\text{\(n\equiv a\pmod{q}\) and \(n\) is \(\mathscr{B}\)-free}\}\gg_{\mathscr{B},\eps} y/q.
\end{equation}
\end{itemize}
Now take
\[
\mathscr{B}_{F}\coloneqq\mathfrak{P}_F\cup \{p^2\colon p\in \prem\sset\mathfrak{P}_F\}
\]
where 
\[
\mathfrak{P}_F\coloneqq\{p\colon\lambda_F(p)=0\}.
\]
With the help of \eqref{KowalskiSaha}, it is easy to check that \(\mathscr{B}_{F}\) satisfies the condition \eqref{Bhypothese}. Clearly if \(n\) is \(\mathscr{B}_{F}\)-free, then certainly \(n\) is square free. Since \(\lambda_F(n)=a_F(n)\) for all square free integers \(n\) and they are multiplicative when restricted on these integers, our choice of \(\mathscr{B}_{F}\) guarantees that \(\lambda_F(n)=a_F(n)\neq0\) if \(n\) is \(\mathscr{B}_{F}\)-free. Thus \eqref{krw} and \eqref{wz} imply the first and second assertions of Theorem \ref{thm3}, respectively.

\section{Appendix: Non-vanishing of Hecke eigenvalues for Siegel cusp forms}

\begin{center}
  \textbf{ Appendix by E. Kowalski, ETH Z\"urich, and A. Saha,
    University of Bristol}
\end{center}

We prove the following result:

\begin{theoremAN}
Let \(F\) be a Siegel cusp form of genus \(2\) and level \(1\) which is a Hecke eigenform. Let \(\lambda_F(p)\) denote the normalized \(p\)-th Hecke eigenvalue of \(F\). Then we have
\[
\#\{p\leq x\colon\lambda_F(p)=0\}=o(\pi(x))
\]
as \(x\rightarrow +\infty\), and in fact there exists \(\delta>0\) such that
\[
\#\{p\leq x\colon \lambda_F(p)=0\}\ll \frac{x}{(\log x)^{1+\delta}}.
\]
\end{theoremAN}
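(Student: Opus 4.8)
The plan is to reduce the statement, in the spirit of Serre's density theorems, to an application of the effective Chebotarev density theorem to the mod-$\ell$ Galois representations attached to $F$, with $\ell$ allowed to grow slowly with $x$. First one disposes of the Saito-Kurokawa case: if $F\in S_k^M$ corresponds to a newform $f$ of weight $2k-2$, then reading off the $p$-th Dirichlet coefficient in $Z_F(s)=\zeta(s-\tfrac{1}{2})\zeta(s+\tfrac{1}{2})L(f,s)$ together with $\sum_n\lambda_F(n)n^{-s}=Z_F(s)/\zeta(2s+1)$ gives $\lambda_F(p)=p^{1/2}+p^{-1/2}+\lambda_f(p)$; since $|\lambda_f(p)|\leq 2$ by Deligne while $p^{1/2}+p^{-1/2}>2$, we get $\lambda_F(p)>0$ for every $p$, so the set in question is empty. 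Hence we may assume $F$ is non-CAP.

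Next, bring in the compatible system $\rho_{F,\ell}\colon\Gal(\overline{\Q}/\Q)\to\GSp_4(\overline{\Q_\ell})$ attached to $F$ (Taylor, Weissauer, Laumon), normalised so that for $p\neq\ell$ the trace of $\rho_{F,\ell}(\mathrm{Frob}_p)$ is a fixed nonzero multiple of $\lambda_F(p)$. Then $\lambda_F(p)=0$ forces $\Tr\,\overline{\rho}_{F,\ell}(\mathrm{Frob}_p)=0$, where $\overline{\rho}_{F,\ell}$ is the reduction modulo a prime above $\ell$. For all but finitely many $\ell$ the image $G_\ell$ of $\overline{\rho}_{F,\ell}$ contains $\Sp_4(\F_\ell)$ — the genus-two analogue of the large-image results of Serre and Ribet (due to Dieulefait and others), valid since $F$ is non-endoscopic and not of exceptional type such as a symmetric-cube or CM lift; the excluded forms, and the finitely many bad $\ell$, are handled separately by reduction to the $\GL_2$-situation, where the corresponding statement is Serre's original theorem. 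Granting big image, the trace-zero locus $C_\ell=\{g\in G_\ell:\Tr\,g=0\}$ is a union of conjugacy classes cutting out a hypersurface, so $|C_\ell|/|G_\ell|\asymp 1/\ell$.

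Now $\#\{p\leq x:\lambda_F(p)=0\}\leq\#\{p\leq x:\Tr\,\overline{\rho}_{F,\ell}(\mathrm{Frob}_p)=0\}$, and I would bound the right-hand side by the effective Chebotarev density theorem (Lagarias--Odlyzko, in a form explicit in the degree and discriminant) applied to the number field cut out by $\overline{\rho}_{F,\ell}$, which has degree $\asymp\ell^{\dim\GSp_4}$ and is ramified only above $\ell$. This yields $\#\{p\leq x:\lambda_F(p)=0\}\ll\pi(x)/\ell+E(\ell,x)$, where $E(\ell,x)$ is the Chebotarev error term; choosing $\ell=\ell(x)$ an appropriate small power of $\log x$ keeps $E(\ell,x)\ll x/(\log x)^{1+\delta}$ while the main term $\pi(x)/\ell$ is comparable, so one obtains $\#\{p\leq x:\lambda_F(p)=0\}\ll x/(\log x)^{1+\delta}$. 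The interplay between the $10$-dimensionality of $\GSp_4$ entering $E(\ell,x)$ and the gain $1/\ell$ from the trace-zero locus is what fixes the admissible range of $\delta$ (and the numerical threshold $\tfrac{1}{10}$).

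The hard part is this final estimate: one needs a Chebotarev theorem strong enough to tolerate a splitting field that grows with $x$, which forces an unconditional treatment of a possible Landau--Siegel zero and a careful tracking of all implied constants in $\ell$; balancing the resulting error against the $1/\ell$ saving is the quantitative heart of the matter. The remaining inputs — existence and normalisation of $\rho_{F,\ell}$, the large-image theorem, and the Saito-Kurokawa reduction — are available off the shelf. (An alternative route through equidistribution of Satake parameters would need symmetric-power functoriality for $F$ in degrees tending to infinity, which is not available; the Galois-theoretic argument sidesteps this.)
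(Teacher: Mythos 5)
Your sketch is correct in outline and uses the same deep inputs as the paper (Weissauer's Galois representations, the large-image theorem of Dieulefait made unconditional by Khare--Wintenberger, the Saito--Kurokawa exclusion, effective Chebotarev), but it implements the final counting differently. The paper fixes a single prime \(\ell\) (totally split in the coefficient field, \(\ell>2k-2\)), shows that the \emph{\(\ell\)-adic} projective image is all of \(\PGSp_4(\Z_\ell)\), and then quotes Serre's Théorème 10 for a conjugation-invariant analytic subset of codimension \(1\) in a \(10\)-dimensional \(\ell\)-adic group; that is exactly where the exponent range \(\delta<1/10\) comes from. You instead let \(\ell\) grow with \(x\), work with the mod-\(\ell\) image containing \(\Sp_4(\F_\ell)\), and redo the Lagarias--Odlyzko balancing by hand. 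This is Serre's ``horizontal'' variant and it does yield \emph{some} \(\delta>0\), which is all the stated theorem requires; but note that the unconditional applicability condition (roughly \(n_L(\log d_L)^2\ll\log x\) with \(n_L\asymp\ell^{10}\) or \(\ell^{11}\)) forces \(\ell\) to be a much smaller power of \(\log x\) than \(1/10\), so your parenthetical claim that this balancing reproduces the threshold \(1/10\) is not right --- in the paper that number is the codimension-to-dimension ratio in Serre's fixed-\(\ell\) theorem, not the outcome of a growing-\(\ell\) optimization. Your approach essentially re-proves, in a special case and with a weaker exponent, what the paper obtains by citing Serre's theorem as a black box.

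Two further points where your sketch is loose, though not fatally so. First, the symplectic structure: Weissauer's general construction lands in \(\GL_4\), and to place the image in \(\GSp_4\) (hence get the \(10\)-dimensional projective group and the \(1/\ell\) saving from the trace-zero hypersurface) the paper needs \(\Pi_F\) to be weakly equivalent to a generic representation, which at level \(1\) comes from the Pitale--Saha--Schmidt transfer to \(\GL_4\); you assert the \(\GSp_4\)-valued compatible system without this input. Second, the exceptional cases: at level \(1\) there are no Yoshida lifts, so after discarding Saito--Kurokawa lifts (your positivity computation \(\lambda_F(p)=p^{1/2}+p^{-1/2}+\lambda_f(p)>0\) is the same as the paper's reduction) the possible small residual images are eliminated inside Dieulefait's classification itself --- via the Ramanujan bound, Serre's conjecture, and the choice of \(\ell\) split in the coefficient field so that the residue field is prime --- rather than by any separate ``reduction to the \(\GL_2\) situation''; that fallback is neither needed nor really available as stated.
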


Although the argument is short, it depends on a combination of extremely deep results. More precisely, we argue along the same lines as Serre's proof in the case of classical holomorphic modular forms; we need then to use both results of Weissauer~\cite{weissauer} constructing the Galois representations attached to \(F\), as well as those of Dieulefait~\cite{dieulefait} studying the images of these representations.  Crucially, we use the fact that some of the conditional statements of Dieulefait are now known unconditionally due to the proof of Serre's conjectures on modular two-dimensional Galois representations over finite fields~\cite{khare-wintenberger} and the proof of the functorial transfer of full level Siegel eigenforms to \(\GL_4\)~\cite{PSS2012}.

\begin{proof}
Let \(F\) be as in the theorem. If \(F\) is a Saito-Kurokawa lift, then \(\lambda_F(p)>0\) for all primes \(p\). So we may assume that  \(F\) is not a Saito-Kurokawa lift. We denote by \(a_F(p)= p^{k-\frac32} \lambda_F(p)\) the unnormalized Hecke eigenvalue of \(F\) at \(p\), so that
\[
\#\{p\leq x\colon\lambda_F(p)=0\}=\#\{p\leq x\colon a_F(p)=0\},
\]
and we will bound the latter.
\par
\textbf{Step 1.} It is well-known (see~\cite{NPS, As-S}) that the representation \(\Pi_F\) of \(\GSp_4(\A)\) generated by the adelization of \(F\) is irreducible\footnote{This is not the same as asserting multiplicity one for \(\GSp_4\), which to the best of our knowledge remains open.}, unitary, cuspidal and belongs to the discrete series of weight \((k,k)\) at infinity. By work of Weissauer, there exists a number field\footnote{Here, the number field is chosen large enough to ensure that for all primes \(\lambda |l\) of \(E\), the representation \(\rho_{F,\lambda}\) is defined over \(E_\lambda\).} \(E/\Q\), such that for any prime number \(l\) and any extension \(\lambda\) of \(l\) to \(E\), there is a semisimple Galois representation
\[
 \rho_{F,\lambda}\colon\Gal(\bar{\Q}/\Q)\rightarrow \GL_4(E_{\lambda})
\]
having the property that at all primes \(p\neq\ell\), \(\rho_{F,\lambda}\) is unramified at \(p\) and furthermore we have
\[
\Tr(\rho_{F,\lambda}(\frob_p))=a_p.
\]
\par
\textbf{Step 2.} A key corollary~\cite[Thm. 5.1.4]{PSS2012} of the functorial transfer of full level Siegel eigenforms to \(\GL_4\) is that \(\Pi_F\) is weakly equivalent to a generic representation. It follows from~\cite[Thm. IV]{weissauer} that the image of \(\rho_{F,\lambda}\) is contained in \(\GSp_4(E_{\lambda})\). Since the image is compact, it follows that  up to conjugation, the image of \(\rho_{F,\lambda}\) is in fact contained in \(\GSp_4(\mathcal{O}_{E_\lambda})\).
\par
\textbf{Step 3.} We fix a prime \(\ell>2k-2\) \emph{totally split} in \(E\). Henceforth, we fix any place \(\lambda\) above \(\ell\) and refer to \(\rho_{F,\lambda}\) as \(\rho_{F,\ell}\); its image is contained inside \(\GSp_4(\Z_\ell)\). Let \(\bar{\rho}_{F,\ell}\) be the projective representation
\[
\bar{\rho}_{F,\ell}\colon\Gal(\bar{\Q}/\Q)\rightarrow \PGSp_4(\Z_{\ell})
\]
obtained from \(\rho_{F,\ell}\), and let \(G\) be its image. This is an \(\ell\)-adic analytic Lie group. Let
\[
C=G\cap \{ g\in\PGSp_4(\Z_{\ell})\colon \Tr(g)=0\},
\]
(where the trace is computed for any lift in \(\GSp_4(\Z_{\ell})\), which is well-defined), a conjugacy-invariant analytic subvariety of \(G\) of codimension \(1\).
\par
\textbf{Step 4.} By a result of Dieulefait~\cite[Th. 4.2]{dieulefait} (see below for details), we have \(G=\PGSp_4(\Z_{\ell})\) for almost all \(l\), so we assume that our \(l\) has this property. Thus \(\dim G=10\) and \(\dim C=9\). We have therefore
\[
\#\{p\leq x\colon a_F(p)=0\}= \#\{p\leq x, p\neq l\colon \bar{\rho}_{F,\ell}(\frob_p)\in C\}+\delta_{a_F(l),0}
\]
and by~\cite[Th. 10]{Serre1981}, we obtain 
\[
\#\{p\leq x\colon a_F(p)=0\}\ll\frac{x}{(\log x)^{1+\delta}}
\]
for any \(\delta<1/10\).
\end{proof}


\begin{remarkAN}
  Dieulefait proceeds by considering the residual mod \(\ell\) reduction of  \(\bar{\rho}_{F, \ell}\), whose image (in our case) lies inside the finite field \(\PGSp_4(\F_\ell)\). If the image is not the full group then it can be classified into ten cases (see~\cite[Sec. 3.1]{dieulefait}) of which cases 9) and 10) cannot occur for us since the finite field is prime. The remaining cases are eliminiated by invoking either the Ramanujan bound or the modularity of two-dimensional Galois representations over finite fields (Serre's conjecture); the latter has been proved by Khare and Wintenberger~\cite{khare-wintenberger}. Note that we do not need the ``untwisted" hypothesis of Dieulefait~\cite[Def. 4.1]{dieulefait} since this hypothesis was only used by him to eliminate cases 9) and 10).
\end{remarkAN}

\begin{remarkAN}
  In our main theorem, the words ``Hecke eigenform'' can be relaxed to ``Hecke eigenform at almost all primes''. Indeed, if \(F\) is a Siegel cusp form of genus \(2\) and level \(1\) which is a Hecke eigenform at almost all primes, then it is automatically an eigenform for all Hecke operators, see~\cite[Cor. 3.4]{NPS}.
\end{remarkAN}

\begin{remarkAN}
  The result does not extend to Siegel cusp forms of arbitrary level \(N\geq 1\) (just as the case of classical cusp forms must exclude the CM forms, which have level larger than \(1\)). More precisely, given a quadratic field \(K/\Q\), and a suitable Hilbert or Bianchi cusp form (see~\cite{JLR} and~\cite{Ber}) \(f\) over \(K\), one can construct a Siegel cusp form \(F\) over \(\Q\) with spinor \(L\)-function given by
\[
L(s,F)=L(s,f)
\]
where the \(L\)-functions are all Langlands-normalized. Note here that \(L(s,f)\) has degree 2 over \(K\) and hence degree 4 over \(\Q\). In particular, for any prime \(p\) such that \(p\) is inert in \(K\) the local \(L\)-factor at \(p\) is a polynomial in \(p^{-2s}\), thus showing that \(\lambda_F(p)=0\) for at least half the primes.
\par
In fact, one can show that this type of example, as well as certain types of Yoshida lifts, are the only examples of Siegel cusp forms of genus \(2\) where the spinor \(L\)-function has a positive density of zero coefficients at primes. We will come back to this in a later paper.
\end{remarkAN}

\providecommand{\bysame}{\leavevmode\hbox to3em{\hrulefill}\thinspace}
\providecommand{\MR}{\relax\ifhmode\unskip\space\fi MR }
\providecommand{\MRhref}[2]{%
  \href{http://www.ams.org/mathscinet-getitem?mr=#1}{#2}
}
\providecommand{\href}[2]{#2}



\begin{thebibliography}{10}

\bibitem{MR1349824}
A.~N. Andrianov and V.~G. Zhuravl{\"e}v, \emph{Modular forms and {H}ecke
  operators}, Translations of Mathematical Monographs, vol. 145, American
  Mathematical Society, Providence, RI, 1995, Translated from the 1990 Russian
  original by Neal Koblitz. \MR{1349824 (96d:11045)}

\bibitem{As-S}
Mahdi Asgari and Ralf Schmidt, \emph{Siegel modular forms and representations},
  Manuscripta Math. \textbf{104} (2001), no.~2, 173--200. \MR{1821182
  (2002a:11044)}

\bibitem{Ber}
T.~Berger, \emph{{Siegel paramodular forms of degree two attached to Bianchi
  modular forms}}, Preprint.

\bibitem{Breulmann1999}
Stefan Breulmann, \emph{On {H}ecke eigenforms in the {M}aa\ss\ space}, Math. Z.
  \textbf{232} (1999), no.~3, 527--530. \MR{1719682 (2000j:11066)}

\bibitem{Das2013}
Soumya Das, \emph{On the natural densities of eigenvalues of a {S}iegel cusp
  form of degree 2}, Int. J. Number Theory \textbf{9} (2013), no.~1, 9--15.
  \MR{2997487}

\bibitem{MR3182539}
Soumya Das, Winfried Kohnen, and Jyoti Sengupta, \emph{On a convolution series
  attached to a {S}iegel {H}ecke cusp form of degree 2}, Ramanujan J.
  \textbf{33} (2014), no.~3, 367--378. \MR{3182539}

\bibitem{dieulefait}
Luis~V. Dieulefait, \emph{On the images of the {G}alois representations
  attached to genus 2 {S}iegel modular forms}, J. Reine Angew. Math.
  \textbf{553} (2002), 183--200. \MR{1944811 (2004c:11079)}

\bibitem{EK14}
Peter~D.T.A Elliott and Jonathan Kish, \emph{{Harmonic Analysis on the Positive
  Rationals II: Multiplicative Functions and Maass Forms}}, Preprint.
  arXiv:1405.7132 [math.NT], 2014.

\bibitem{Er66}
P.~Erd{\H{o}}s, \emph{On the difference of consecutive terms of sequences
  defined by divisibility properties}, Acta Arith \textbf{12} (1966/1967),
  175--182. \MR{0207673 (34 \#7488)}

\bibitem{HallTenenbaum1991}
R.~R. Hall and G.~Tenenbaum, \emph{Effective mean value estimates for complex
  multiplicative functions}, Math. Proc. Cambridge Philos. Soc. \textbf{110}
  (1991), no.~2, 337--351. \MR{1113432 (93e:11109)}

\bibitem{JLR}
Jennifer Johnson-Leung and Brooks Roberts, \emph{Siegel modular forms of degree
  two attached to {H}ilbert modular forms}, J. Number Theory \textbf{132}
  (2012), no.~4, 543--564. \MR{2887605}

\bibitem{khare-wintenberger}
Chandrashekhar Khare and Jean-Pierre Wintenberger, \emph{Serre's modularity
  conjecture. {I}}, Invent. Math. \textbf{178} (2009), no.~3, 485--504.
  \MR{2551763 (2010k:11087)}

\bibitem{Kohnen2007}
Winfried Kohnen, \emph{Sign changes of {H}ecke eigenvalues of {S}iegel cusp
  forms of genus two}, Proc. Amer. Math. Soc. \textbf{135} (2007), no.~4,
  997--999 (electronic). \MR{2262899 (2007j:11057)}

\bibitem{KowalskiRobertWu2007}
Emmanuel Kowalski, Olivier Robert, and Jie Wu, \emph{Small gaps in coefficients
  of {$L$}-functions and {$\mathfrak{B}$}-free numbers in short intervals},
  Rev. Mat. Iberoam. \textbf{23} (2007), no.~1, 281--326. \MR{2351136
  (2008m:11100)}

\bibitem{LiuWu2014}
JianYa Liu and Jie Wu, \emph{{The number of coefficients of automorphic
  $L$-functions for $GL_m$ of same signs}}, Preprint. 1404.6867 [math.NT],
  2014.

\bibitem{MR14}
Kaisa Matom\"aki and Maksym Radziwill, \emph{{Sign changes of Hecke
  eigenvalues}}, Preprint. arXiv:1405.7671 [math.NT], 2014.

\bibitem{NPS}
Hiro-aki Narita, Ameya Pitale, and Ralf Schmidt, \emph{Irreducibility criteria
  for local and global representations}, Proc. Amer. Math. Soc. \textbf{141}
  (2013), no.~1, 55--63. \MR{2988710}

\bibitem{PSS2012}
Ameaya Pitale, Abhishek Saha, and Ralf Schmidt, \emph{{Transfer of Siegel cusp
  forms of degree 2}}, Mem. Amer. Math. Soc. \textbf{232} (2014), no.~1090,
  vi+107.

\bibitem{PS2008}
Ameya Pitale and Ralf Schmidt, \emph{Sign changes of {H}ecke eigenvalues of
  {S}iegel cusp forms of degree 2}, Proc. Amer. Math. Soc. \textbf{136} (2008),
  no.~11, 3831--3838. \MR{2425722 (2009f:11056)}

\bibitem{PS2009}
\bysame, \emph{Ramanujan-type results for {S}iegel cusp forms of degree 2}, J.
  Ramanujan Math. Soc. \textbf{24} (2009), no.~1, 87--111. \MR{2514151
  (2010f:11074)}

\bibitem{RSW2014}
Emmanuel Royer, Jyoti Sengupta, and Jie Wu, \emph{Sign changes in short
  intervals of coefficients of spinor zeta function of a siegel cusp form of
  genus 2}, Int. J. Number Theory \textbf{10} (2014), no.~2, 327--339.

\bibitem{Serre1981}
Jean-Pierre Serre, \emph{Quelques applications du th\'eor\`eme de densit\'e de
  {C}hebotarev}, Inst. Hautes \'Etudes Sci. Publ. Math. (1981), no.~54,
  323--401. \MR{644559 (83k:12011)}

\bibitem{Song2001}
Joung~Min Song, \emph{Sums of nonnegative multiplicative functions over
  integers without large prime factors. {I}}, Acta Arith. \textbf{97} (2001),
  no.~4, 329--351. \MR{1823551 (2002f:11130)}

\bibitem{Te95}
G{\'e}rald Tenenbaum, \emph{Introduction to analytic and probabilistic number
  theory}, Cambridge Studies in Advanced Mathematics, vol.~46, Cambridge
  University Press, Cambridge, 1995, Translated from the second French edition
  (1995) by C. B. Thomas. \MR{1342300 (97e:11005b)}

\bibitem{weissauer}
Rainer Weissauer, \emph{Four dimensional {G}alois representations},
  Ast\'erisque (2005), no.~302, 67--150, Formes automorphes. II. Le cas du
  groupe ${\rm{G}}Sp(4)$. \MR{2234860 (2007f:11057)}

\bibitem{Weissauer1993}
\bysame, \emph{Endoscopy for {${\rm GSp}(4)$} and the cohomology of {S}iegel
  modular threefolds}, Lecture Notes in Mathematics, vol. 1968,
  Springer-Verlag, Berlin, 2009. \MR{2498783 (2010h:11086)}

\bibitem{WuYe2007}
Jie Wu and Yangbo Ye, \emph{Hypothesis {H} and the prime number theorem for
  automorphic representations}, Funct. Approx. Comment. Math. \textbf{37}
  (2007), no.~part 2, 461--471. \MR{2364718 (2009g:11063)}

\bibitem{WuZhai2013}
Jie Wu and Wenguang Zhai, \emph{Distribution of {H}ecke eigenvalues of newforms
  in short intervals}, Q. J. Math. \textbf{64} (2013), no.~2, 619--644.
  \MR{3063525}

\bibitem{MR633910}
D.~Zagier, \emph{Sur la conjecture de {S}aito-{K}urokawa (d'apr\`es {H}.
  {M}aass)}, Seminar on {N}umber {T}heory, {P}aris 1979--80, Progr. Math.,
  vol.~12, Birkh\"auser Boston, Mass., 1981, pp.~371--394. \MR{633910
  (83b:10031)}

\end{thebibliography}
\end{document}